\journal{Journal of \LaTeX\ Templates}
\newtheorem{thm}{Theorem}
\newtheorem{defn}{Definition}[section]
\newtheorem{cor}{Corollary}
\newtheorem{ex}{Example}
\newdefinition{rmk}{Remark}
\newproof{pf}{Proof}
\newproof{pot}{Proof of Theorem \ref{thm2}}
\begin{document}

\begin{frontmatter}

\title{An efficient spectral method for solving third-kind Volterra integral equations with non-smooth
solutions}

\author[1]{Y. Talaei}
\author[2]{P. M. Lima}
\address[1]{Department of Applied Mathematics, Faculty of Mathematical Sciences, University of Tabriz, Tabriz, Iran.}           %  \\
 \address[2]{ Centro de Matema'tica Computacional e Estoca'stica, Instituto
Superior Te'cnico, Universidade de Lisboa, Av. Rovisco Pais,
1049-001 Lisboa, Portugal.}
\ead{y_talaei@tabrizu.ac.ir (Corresponding author), pedro.t.lima@tecnico.ulisboa.pt}
%\address{Department of Applied Mathematics, Faculty of Mathematical Sciences, University of Tabriz, Tabriz, Iran.}

\begin{abstract}
This paper is concerned with the numerical solution of the third kind Volterra integral equations with non-smooth solutions based on the recursive approach of the spectral Tau method.  To this end, a new set of the fractional version of canonical basis polynomials (called FC-polynomials) is introduced.
The approximate polynomial solution (called Tau-solution) is expressed in terms of FC-polynomials.
The fractional structure of Tau-solution allows recovering the standard degree of accuracy of spectral methods even in the case of non-smooth solutions. The convergence analysis of the method is studied. The obtained numerical results show the accuracy and efficiency of the method compared to other existing methods.
\end{abstract}

\begin{keyword}
Fractional recursive Tau method, Third kind Volterra integral equation, Fractional canonical polynomials,  Convergence analysis, Non-smooth solutions.
\end{keyword}

\end{frontmatter}
\section{Introduction}
Integral equations appeared for the first time in a work  by V. Volterra in 1884,  where he studied the  solution of an electrostatic problem \cite{PL1},and later used by the same author  in the modeling of population growth \cite{PL2}. Many mathematical models that arise  in various problems of physics, biology, chemistry, engineering, etc. are based on the integral equations \cite{Wazwaz,Corduneanu,Rahman}. Some of the most well-known numerical techniques used to approximate solutions of integral equations are: multi-step methods \cite{Paternoster,Jackiewicz,Conte2,Jackiewicz2}, spectral methods \cite{Paternoster,JShen,Brunner2017}, product integration methods \cite{Linz,Diogo}, Adomian decomposition method, homotopy perturbation method, Picard method \cite{modified,Wazwaz,Cherruault}, etc.

The linear Volterra integral equations (VIEs) of the general form
\begin{equation}\label{kmm}
a(t)y(t)=g(t)+\int_{0}^{t}k(t,s)y(s)ds, \ \ t\in I=[0,1],
\end{equation}
where $a(t)=0$ at a finite number of points in $I$ are called the third kind VIEs. In 1896, Volterra studied the solvability of the form (\ref{kmm}) when $k(0,0)=0$ and $k(0,0)\neq0$ on $(0,1]$ (\cite{NotaIII}, Nota III). The corresponding integral operator in Eq. (\ref{kmm}) is non-compact on $C(I)$ if $k(0,0)\neq0$ and its spectrum will be uncountable \cite{Brunner2017}.  This is a fundamental  property for investigating the existence and uniqueness of solution for Eq. (\ref{kmm}). In 1911, the third kind VIEs with various types of kernel singularities were studied by Evans in \cite{Evans}.
The third kind VIEs with weakly singular kernels have been considered in \cite{Pereverzev,Allaei1,Brunner2017} and references therein.

The aim of this paper is to present a numerical method for solving a class of third kind VIEs with weakly singular kernels
\begin{equation}\label{1}
t^{\beta}y(t)=t^{\beta}g(t)+\int_{0}^{t}(t-s)^{\gamma -1}s^{\beta-\gamma}H(t,s)y(s)ds,\ \ \ \ \ \ t\in I,\ \
\end{equation}
with $\beta\geq\gamma,\ \ 0<\gamma \leq 1,\ \beta>0$ and assume that
\begin{equation}\label{cvcvx}
\gamma=\frac{p_{1}}{q_{1}} ,\ \ \ \ \beta=\frac{p_{2}}{q_{2}},\ \
\end{equation}
where $ \ p_{i},q_{i}\in \mathds{N}$ and $lcm(p_{i},q_{i})=1$ for $i=1,2$. We denote the least common multiple of two positive integers $a$ and $b$ by $lcm(a,b)$. The given functions $g$ and $H$ are continuous on the domains $I=[0,1]$  and $ D:=\lbrace (t,s): t\in I, 0\leq s \leq t\rbrace $, respectively, and $y(t)$ is the unknown function.  Eq. (\ref{1}) can be written in the form of the equivalent cordial integral equation
 \begin{equation}\label{100}
y(t)=g(t)+\mathcal{K}y(t), \ \
\end{equation}
where
\begin{equation}\label{101}
\mathcal{K}y(t)=\int_{0}^{t}t^{-\beta}(t-s)^{\gamma -1}s^{\beta-\gamma}H(t,s)y(s)ds.
\end{equation}
is called cordial Volterra integral operator. This is a bounded linear operator on $C(T)$. In 2015,
 Allaei et. al. \cite{Allaei1} studied the existence and uniqueness of the solutions of Eq. (\ref{100}):
\begin{thm}\label{ml} The cordial integral operator $\mathcal{K}$ with $\beta\geq \gamma$ is compact if $H(0,0)=0$,  otherwise, it is a non-compact operator
with the uncountable spectrum
\[
\bigtriangleup_{\mathcal{K}}=\lbrace 0\rbrace \cup \lbrace H(0,0)B(\gamma,1-\gamma+\beta+\lambda); \ Re (\lambda)\geq 0 \rbrace.
\]
Also, the Eq. (\ref{100}) has a unique solution $y\in C(I)$ if $1 \notin \bigtriangleup_{\mathcal{K}}$. Here, $B(.,.)$ denotes the Beta function and
\[
\mathcal{K}t^{\lambda}=\widehat{\Theta}(\lambda)t^{\lambda};\ \ \widehat{\Theta}(\lambda):=\int_{0}^{1}\Theta(t)t^{\lambda}dt,
\]
with $\Theta(t)=t^{\beta-\gamma}(1-t)^{\gamma-1}$.
\end{thm}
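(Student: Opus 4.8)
The plan is to recognise $\mathcal{K}$ as a \emph{cordial} Volterra integral operator in the sense of Vainikko and then to read off all four assertions from that theory. First I would normalise the operator by the substitution $s=t\tau$, $ds=t\,d\tau$ in \eqref{101}. A short computation collapses all powers of $t$ in front of the integral,
\begin{equation*}
\mathcal{K}y(t)=\int_{0}^{1}(1-\tau)^{\gamma-1}\tau^{\beta-\gamma}H(t,t\tau)\,y(t\tau)\,d\tau=\int_{0}^{1}\Theta(\tau)\,H(t,t\tau)\,y(t\tau)\,d\tau,
\end{equation*}
so that $\mathcal{K}$ is exactly a cordial operator with \emph{core} $\Theta(\tau)=\tau^{\beta-\gamma}(1-\tau)^{\gamma-1}$ and smooth \emph{coefficient} $H$.

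Next I would verify the two standing hypotheses of the theory. The core lies in $L^{1}(0,1)$: integrability at $\tau=0$ follows from $\beta-\gamma\geq0>-1$, and at $\tau=1$ from $\gamma-1>-1$, i.e.\ $\gamma>0$. This already yields that $\mathcal{K}$ is a bounded linear operator on $C(I)$. I would then evaluate the symbol directly from the Beta integral,
\begin{equation*}
\widehat{\Theta}(\lambda)=\int_{0}^{1}\tau^{\beta-\gamma+\lambda}(1-\tau)^{\gamma-1}\,d\tau=B(\beta-\gamma+\lambda+1,\gamma)=B(\gamma,1-\gamma+\beta+\lambda),
\end{equation*}
the last equality by the symmetry $B(x,y)=B(y,x)$; this is the expression appearing in $\bigtriangleup_{\mathcal{K}}$. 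Freezing the coefficient at the origin, the model operator $\mathcal{K}_{0}$ (with $H\equiv1$) satisfies $\mathcal{K}_{0}t^{\lambda}=\widehat{\Theta}(\lambda)t^{\lambda}$, which is the eigen-relation quoted at the end of the statement and the source of the continuous spectrum.

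The structural step is the splitting $H(t,s)=H(0,0)+\bigl[H(t,s)-H(0,0)\bigr]$. The remainder coefficient vanishes at $(0,0)$, so by the compactness criterion for cordial operators the associated piece is compact on $C(I)$; hence $\mathcal{K}=H(0,0)\,V_{\Theta}+\text{(compact)}$, where $V_{\Theta}$ is the constant-core cordial operator. If $H(0,0)=0$ the whole operator reduces to the compact remainder, proving compactness. If $H(0,0)\neq0$, then $\mathcal{K}$ is a compact perturbation of the non-compact operator $H(0,0)V_{\Theta}$, hence itself non-compact, and its spectrum is the image of the closed right half-plane $\{\operatorname{Re}\lambda\geq0\}$ under $\lambda\mapsto H(0,0)\widehat{\Theta}(\lambda)$ together with $\{0\}$, which is precisely $\bigtriangleup_{\mathcal{K}}$. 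Finally, writing \eqref{100} as $(I-\mathcal{K})y=g$, the condition $1\notin\bigtriangleup_{\mathcal{K}}$ makes $I-\mathcal{K}$ boundedly invertible on $C(I)$, giving the unique solution $y=(I-\mathcal{K})^{-1}g\in C(I)$.

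I expect the main obstacle to be the exact identification of the spectrum rather than the boundedness or compactness bookkeeping. Showing that \emph{every} point $H(0,0)\widehat{\Theta}(\lambda)$ with $\operatorname{Re}\lambda\geq0$ actually belongs to $\sigma(\mathcal{K})$, and that these points together with $0$ \emph{exhaust} it, is the deep content of the cordial-operator theory: it rests on passing to logarithmic coordinates $t=e^{-x}$, under which $V_{\Theta}$ becomes a convolution, and on a Wiener/Paley--Wiener-type analysis of the resulting Mellin symbol $\widehat{\Theta}$. Since the present statement is quoted from Allaei et al.\ \cite{Allaei1}, I would cite that spectral analysis for this step and confine my own verification to the reduction above: the substitution, the $L^{1}$ bound, the Beta-function evaluation of $\widehat{\Theta}$, and the compact-perturbation argument that transfers their result to the variable coefficient $H$.
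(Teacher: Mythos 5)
The paper does not prove this theorem at all: it is quoted verbatim from Allaei, Yang and Brunner \cite{Allaei1} as a known result, so there is no in-paper argument to compare against. Your reduction is nevertheless correct and is essentially how the result is established in that reference and in Vainikko's cordial-operator theory: the substitution $s=t\tau$ does collapse the powers of $t$ (the exponents sum to $-\beta+(\gamma-1)+(\beta-\gamma)+1=0$), the core $\Theta(\tau)=\tau^{\beta-\gamma}(1-\tau)^{\gamma-1}$ is in $L^{1}(0,1)$ precisely because $\beta\geq\gamma$ and $\gamma>0$, and the Beta-function evaluation of $\widehat{\Theta}$ matches the set $\bigtriangleup_{\mathcal{K}}$ in the statement. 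The one place where your wording slightly overreaches is the claim that the compact-perturbation step by itself yields the spectrum: it gives non-compactness and preservation of the \emph{essential} spectrum, but the equality of the full spectrum with $\lbrace 0\rbrace\cup\lbrace H(0,0)\widehat{\Theta}(\lambda):\operatorname{Re}\lambda\geq 0\rbrace$ (no extra isolated eigenvalues) is exactly the deep content you correctly defer to the cited spectral analysis, so this is a matter of attribution rather than a gap. Given that the paper itself only cites the result, your proposal actually supplies more verification than the source text does.
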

In the last years, some numerical methods have been introduced to solve the Eq. (\ref{1}) including: collocation method with modified graded mesh \cite{Allaei1}, operational matrix method with hat functions \cite{Nemati2}, multi-step collocation method \cite{Shayanfard}, collocation method \cite{Song},  Legendre-Galerkin method \cite{Cai}, Bernstein approximation technique \cite{Usta}. However, there is very little work on the third kind Volterra  equations with non-smooth solutions via spectral methods.

The spectral methods are a class of applicable numerical techniques for obtaining approximate solutions of functional equations, based on  polynomials basis functions such as Chelyshev, Legendre, Jacobi, atc.
The three most commonly used techniques among spectral methods are: Galerkin, collocation, and Tau method \cite{JShen}. These methods have exponential rate of convergence in solving the problems with smooth solutions \cite{Ca, Brunner2017,Hesthaven}.
 Since spectral methods  with usual polynomial basis have low convergence order in the case of non-smooth solutions, we need to use appropriate basis  in this case. A useful technique to solve this problem is to use a fractional version of polynomial basis functions, see  \cite{camjournal,Maleknejad,Legendre,Calcolo,Talaei1} and references therein. Based on this motivational background,  the main focus of this paper is to develop a new version of recursive approach to the Tau method to solve Eq. (\ref{1}) by introducing a fractional set of the canonical polynomial basis (FC-polynomials). The FC-polynomials are constructed by a simple recursive algorithm. The approximate solution of the problem is obtained as a linear combination of FC-polynomials that is called Tau-solution. The unknown coefficients ($\tau$ parameters) in the Tau-solution are calculated by solving a linear algebraic system.

This paper contains the following sections:
\begin{itemize}
\item Section \ref{Sec2}: Some definitions and Theorems about shifted fractional Legendre polynomials on $[0,1]$ are presented.
\item Section \ref{Sec3}: A new fractional version of recursive Tau method to solve Eq. (\ref{1}) is introduced.
\item Section \ref{Sec4}:  The convergence of this method is analyzed.
\item Section \ref{Sec5}: Some examples are given to show the accuracy of the method in comparison with other existing methods.
\item Section \ref{Sec6}:   Conclusions and further work.
\end{itemize}
\section{Shifted fractional order Legendre polynomials}\label{Sec2}
The shifted fractional Legendre polynomials (FLPs) $\lbrace P_{i,\theta}(t)\rbrace_{i=0}^{\infty}$ on $[0,1]$ are the eigenfunctions of a fractional singular Sturm-Liouville equation
\begin{equation}
\frac{1}{\omega_{\theta}(t)}\mathbb{\partial}_{t}\lbrace \theta^{-1}(1-t^{\theta})t \mathbb{\partial}_{t}u(t)\rbrace=-i(i+1)u(t),
\end{equation}
where $\omega_{\theta}(t)=\theta t^{\theta-1}$ and $0<\theta\le 1$. The FLPs are orthogonal with respect to $\omega_{\theta}(t)$, namely,
\begin{equation}\label{oppo}
\int_{0}^{1}P_{i,\theta}(t) P_{j,\theta}(t)\omega_{\theta}(t)dt=\frac{\delta_{ij}}{2i+1}.
\end{equation}
and can be generated by the three-term recurrence relation as follows
\begin{align*}
&P_{i+1,\theta}(t)=\frac{(2i+1)(2t^{\theta}-1)}{i+1}P_{i,\theta}(t)-\frac{i}{i+1}P_{i-1,\theta}(t),\ \ \ i=1,2,...,\\
&P_{0,\theta}(t)=1,\ \ \ P_{1,\theta}(t)=2t^{\theta}-1.
\end{align*}
The explicit form of the FLPs is given by
\begin{equation}
P_{i,\theta}(t)=\sum_{j=0}^{i}C_{i,j}t^{j\theta};\ \ \ C_{i,j}=\frac{(-1)^{i+j}(i+j)!}{(i-j)!(j!)^{2}},\ \ \ \ i=0,1,...\ .
\end{equation}
Define
\begin{itemize}
\item $L^{2}[0,1]:=\lbrace u\vert u:[0,1]\rightarrow\mathds{R}_{+};\ \bigg(\displaystyle\int_{[0,1]}\vert u(t)\vert^2dt\bigg)^{1/2}<\infty \rbrace,$
\item $\mathbb{M}^{\theta}_{n}:=span\lbrace 1,t^{\theta},...,t^{n\theta}\rbrace$,
\item $\mathbb{W}^{\theta}_{n}:=span\lbrace P_{0,\theta}(t),...,P_{n,\theta}(t)\rbrace$,
\item $\widehat{\mathbb{W}}^{\theta}_{n}:=\mathbb{W}^{\theta}_{n}\otimes\mathbb{W}^{\theta}_{n}$.
\end{itemize}
From orthogonality condition (\ref{oppo}), the best approximation for $u(t)\in L^{2}[0,1]$ in the space $\mathbb{W}^{\theta}_{n}$ can be derived in the following form
\begin{equation}\label{tyyo}
u_{n}(t)=\sum_{i=0}^{n}a_{i}P_{i,\theta}(t)\simeq u(t),
\end{equation}
with
\[
a_{i}=(2i+1)\int_{0}^{1}u(t)P_{i,\theta}(t)\omega_{\theta}(t)dt,\ \ \ i=0,1,...,n.
\]
Similarly, for any two-variable function $U(t,s)\in L^{2}([0,1]\times [0,1])$, the best approximation in the space $\widehat{\mathbb{W}}^{\theta}_{n}$ can be derived as follows
\begin{equation}\label{TWo}
U_{n}(t,s)=\sum_{i=0}^{n}\sum_{j=0}^{n}a_{ij}P_{i,\theta}(t)P_{j,\theta}(s)\simeq U(t,s).
\end{equation}
in which
\[
a_{ij}=(2i+1)(2j+1)\int_{0}^{1}\int_{0}^{1}u(t,s)P_{i,\theta}(t)P_{j,\theta}(s)\omega_{\theta}(t)\omega_{\theta}(s)dtds,
\]
for $i,j=1,...,n$. See further details in \cite{JShen,Shenm,Legendre}.
\begin{thm}(One-variable fractional Taylor series \cite{Odibat})\label{MOpp}
 Let $0<\theta< 1$ and
\[
\mathbb{\partial}_{t}^{i\theta}u(t)\in C(0,1], \ i=0,...,n+1.
\]
Then,
\begin{equation}\label{1231}
u(t)=\sum_{i=0}^{n}\frac{\mathbb{\partial}_{t}^{i\theta}u(0)}{\Gamma(i\theta+1)}t^{i\theta}+\frac{t^{(n+1)\theta}}{\Gamma((n+1)\theta+1)}\mathbb{\partial}_{t}^{(n+1)\theta}u(t)\vert_{t=\xi},
\end{equation}
where $0 < \xi \leq t$, $\forall t\in (0,1]$. Here, the operator $\mathbb{\partial}_{t}^{\theta}$ denotes the  Caputo fractional derivative \cite{Kai}
\begin{equation}
\mathbb{\partial}_{t}^{\theta}u(t):=\frac{1}{\Gamma (\lceil \theta \rceil - \theta)}\displaystyle\int_{0}^{t}(t-s)^{\lceil \theta \rceil - \theta-1}u^{\lceil \theta \rceil}(s)ds.
\end{equation}
\end{thm}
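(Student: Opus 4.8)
The plan is to reduce the statement to the fundamental theorem of fractional calculus together with a telescoping argument, and then extract the Lagrange-type remainder by a mean value theorem for integrals. Throughout, I write $\partial_t^{i\theta}$ for the $i$-fold composition $(\partial_t^{\theta})^{i}$ of the Caputo derivative, and let $J^{\theta}f(t)=\frac{1}{\Gamma(\theta)}\int_0^t (t-s)^{\theta-1}f(s)\,ds$ denote the Riemann--Liouville integral of order $\theta$. I will use three standard facts valid under the stated regularity: the semigroup property $J^{\alpha}J^{\beta}=J^{\alpha+\beta}$ (so that $(J^{\theta})^{i}=J^{i\theta}$), the evaluation $J^{\alpha}1=t^{\alpha}/\Gamma(\alpha+1)$, and the inversion identity $J^{\theta}\partial_t^{\theta}u(t)=u(t)-u(0)$ for $0<\theta<1$ (which follows from $\partial_t^{\theta}=J^{1-\theta}\partial_t$ and $J^{\theta}J^{1-\theta}=J^{1}$).

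First I would set $u_i:=\partial_t^{i\theta}u$, so that $u_0=u$ and $u_{i+1}=\partial_t^{\theta}u_i$. Applying the inversion identity to $u_i$ gives $J^{\theta}u_{i+1}(t)=u_i(t)-u_i(0)$; composing on the left with $J^{i\theta}$ and using the semigroup property yields
\begin{equation}
J^{(i+1)\theta}u_{i+1}(t)=J^{i\theta}u_i(t)-u_i(0)\,\frac{t^{i\theta}}{\Gamma(i\theta+1)}.
\end{equation}
This is a telescoping recursion in the quantity $A_i:=J^{i\theta}u_i(t)$. Summing from $i=0$ to $i=n$ and noting $A_0=u(t)$ collapses the sum to
\begin{equation}
u(t)=\sum_{i=0}^{n}\frac{\partial_t^{i\theta}u(0)}{\Gamma(i\theta+1)}\,t^{i\theta}+J^{(n+1)\theta}\partial_t^{(n+1)\theta}u(t),
\end{equation}
which already reproduces the polynomial part of the claimed expansion.

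It then remains to convert the integral remainder $J^{(n+1)\theta}\partial_t^{(n+1)\theta}u(t)=\frac{1}{\Gamma((n+1)\theta)}\int_0^t(t-s)^{(n+1)\theta-1}u_{n+1}(s)\,ds$ into the stated Lagrange form. Since the weight $(t-s)^{(n+1)\theta-1}$ is non-negative and integrable on $(0,t)$ and $u_{n+1}=\partial_t^{(n+1)\theta}u$ is continuous on $(0,1]$ by hypothesis, the weighted mean value theorem for integrals supplies a point $\xi\in(0,t]$ with $\int_0^t(t-s)^{(n+1)\theta-1}u_{n+1}(s)\,ds=u_{n+1}(\xi)\int_0^t(t-s)^{(n+1)\theta-1}\,ds=u_{n+1}(\xi)\,t^{(n+1)\theta}/\big((n+1)\theta\big)$. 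Combining this with $\Gamma((n+1)\theta)\,(n+1)\theta=\Gamma((n+1)\theta+1)$ produces exactly the remainder $\frac{t^{(n+1)\theta}}{\Gamma((n+1)\theta+1)}\partial_t^{(n+1)\theta}u(\xi)$, completing the argument.

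The main obstacle I anticipate is justifying the operator identities under the weak regularity assumed here: the inversion formula $J^{\theta}\partial_t^{\theta}u=u-u(0)$ and the semigroup law $(J^{\theta})^{i}=J^{i\theta}$ are immediate for smooth $u$, but with only $\partial_t^{i\theta}u\in C(0,1]$ one must ensure that the boundary values $\partial_t^{i\theta}u(0)$ are well defined (as limits $t\to 0^{+}$) and that each $u_i$ is integrable near $0$, so that the repeated compositions are legitimate. A second, milder, point is that the mean value theorem for integrals is usually stated for integrands continuous on the closed interval; because $u_{n+1}$ is only continuous on $(0,1]$, I would secure $\xi\in(0,t]$ via the non-negativity of the singular weight together with a limiting argument on $[\varepsilon,t]$ as $\varepsilon\to 0^{+}$, rather than invoking the closed-interval version directly.
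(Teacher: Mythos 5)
The paper offers no proof of this theorem: it is quoted verbatim from the cited reference of Odibat and Shawagfeh, so there is no in-paper argument to compare against. Your proposal is a correct reconstruction of that reference's own proof --- the telescoping identity $J^{i\theta}\partial_t^{i\theta}u - J^{(i+1)\theta}\partial_t^{(i+1)\theta}u = \partial_t^{i\theta}u(0)\,t^{i\theta}/\Gamma(i\theta+1)$ summed over $i$, followed by the weighted mean value theorem on the remainder --- and the two caveats you flag (reading $\partial_t^{i\theta}$ as the $i$-fold composition $(\partial_t^{\theta})^i$ rather than a single Caputo derivative of order $i\theta$, and the existence of the limits $\partial_t^{i\theta}u(0^+)$ under mere continuity on $(0,1]$) are exactly the points the cited source also leaves implicit.
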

\begin{thm}\cite{Maleknejad}(Two-variable fractional Taylor series)
Let $0<\theta\le 1$ and
\[
 \mathbb{\partial}_{t}^{i\theta}\mathbb{\partial}_{s}^{j\theta}U(t,s)\in C(0,1], \ i,j=0,...,n+1.
\]
 Then,
\begin{align}
U(t,s)&=\sum_{i=0}^{n}\sum_{j=0}^{i}\frac{\mathbb{\partial}_{t}^{(i-j)\theta}\mathbb{\partial}_{s}^{j\theta}U(0,0)}{\Gamma((i-j)\theta+1)\Gamma(j\theta+1)}t^{(i-j)\theta}s^{j\theta}\nonumber\\
&+\sum_{i=0}^{n+1}\frac{\mathbb{\partial}_{t}^{(n+1-i)\theta}\mathbb{\partial}_{s}^{i\theta}U(\xi,\eta)}{\Gamma((n+1-i)\theta+1)\Gamma(i\theta+1)}t^{(n+1-i)\theta}s^{i\theta}
\end{align}
where $0 < \xi \leq t$, $0 < \eta \leq s$, for all $t,s\in (0,1]$.
\end{thm}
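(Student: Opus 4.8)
The plan is to derive the bivariate expansion by invoking the one‑variable fractional Taylor series of Theorem \ref{MOpp} twice, in a nested fashion — first in $t$ with $s$ held fixed, then in $s$ applied to the resulting $t$‑coefficients — and then to regroup the emerging double sum by total fractional order. The case $\theta=1$ should be disposed of first: there the operators $\partial_t^{i\theta},\partial_s^{j\theta}$ collapse to ordinary partial derivatives and the statement is precisely the classical two‑variable Taylor formula with Lagrange remainder, so I treat it separately and assume $0<\theta<1$ in what follows, which is exactly the range in which Theorem \ref{MOpp} is available.

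First I fix $s\in(0,1]$ and apply Theorem \ref{MOpp} to the map $t\mapsto U(t,s)$ at order $n$, which is legitimate since $\partial_t^{i\theta}U(\cdot,s)\in C(0,1]$ for $i=0,\dots,n+1$ by hypothesis, obtaining
\begin{equation*}
U(t,s)=\sum_{m=0}^{n}\frac{\partial_t^{m\theta}U(0,s)}{\Gamma(m\theta+1)}\,t^{m\theta}+\frac{t^{(n+1)\theta}}{\Gamma((n+1)\theta+1)}\,\partial_t^{(n+1)\theta}U(\xi,s),\qquad 0<\xi\le t.
\end{equation*}
Next, for each $m$ I expand the coefficient $s\mapsto\partial_t^{m\theta}U(0,s)$ by Theorem \ref{MOpp} at order $n-m$, using the hypothesis together with the commutativity $\partial_s^{j\theta}\partial_t^{m\theta}U=\partial_t^{m\theta}\partial_s^{j\theta}U$ of the mixed Caputo derivatives, to get
\begin{equation*}
\partial_t^{m\theta}U(0,s)=\sum_{j=0}^{n-m}\frac{\partial_t^{m\theta}\partial_s^{j\theta}U(0,0)}{\Gamma(j\theta+1)}\,s^{j\theta}+\frac{s^{(n-m+1)\theta}}{\Gamma((n-m+1)\theta+1)}\,\partial_t^{m\theta}\partial_s^{(n-m+1)\theta}U(0,\eta_m),
\end{equation*}
with $0<\eta_m\le s$. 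Substituting the second expansion into the first and collecting the non‑remainder terms produces $\sum_{m=0}^{n}\sum_{j=0}^{n-m}(\cdots)\,t^{m\theta}s^{j\theta}$ over the triangle $m,j\ge0,\ m+j\le n$; the change of index $i=m+j$ (keeping $j$) converts this triangle into $\sum_{i=0}^{n}\sum_{j=0}^{i}$ and reproduces the claimed principal sum verbatim, with $i-j$ playing the role of $m$ and the Gamma normalizations matching exactly.

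It remains to assemble the remainder. The terms produced are the single $t$‑remainder (total order $n+1$, no $s$‑derivative) and the $n+1$ products $\tfrac{t^{m\theta}}{\Gamma(m\theta+1)}R^{(s)}_m$, each again of total order $n+1$; re‑indexing the latter by $i=n+1-m$ places them at $i=1,\dots,n+1$ while the $t$‑remainder supplies $i=0$, so the collected remainder carries exactly the index set $i=0,\dots,n+1$ with the normalizations $\Gamma((n+1-i)\theta+1)\Gamma(i\theta+1)$ of the statement. I expect the main obstacle to be the consolidation of the intermediate points rather than any analytic difficulty: the nested applications of Theorem \ref{MOpp} yield a distinct evaluation point for each remainder term — namely $(\xi,s)$ for $i=0$ and $(0,\eta_m)$ for $i\ge1$ — whereas the statement records a single pair $(\xi,\eta)$ with $0<\xi\le t,\ 0<\eta\le s$. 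The honest reading is that the common $(\xi,\eta)$ abbreviates ``some admissible intermediate points,'' and the delicate bookkeeping is to verify that each term is well defined and of the stated order, which follows from the assumed continuity of the order‑$(n+1)$ mixed derivatives on $(0,1]$.
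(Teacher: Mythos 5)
The paper contains no proof of this theorem: it is quoted from \cite{Maleknejad}, so there is no internal argument to compare yours against. Your nested application of Theorem \ref{MOpp} --- first in $t$ at order $n$, then in $s$ at order $n-m$ on each coefficient $\partial_{t}^{m\theta}U(0,\cdot)$ --- is the standard derivation and is essentially the route taken in the cited source; the index bookkeeping (the triangle $m+j\le n$ re-indexed by $i=m+j$, the remainders re-indexed by $i=n+1-m$) is correct and reproduces both sums with the stated Gamma normalizations.

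Two caveats keep this from being a complete proof of the statement as literally written. First, the commutativity $\partial_{s}^{j\theta}\partial_{t}^{m\theta}U=\partial_{t}^{m\theta}\partial_{s}^{j\theta}U$ is asserted rather than established; the hypothesis lists continuity of the mixed derivatives in one fixed order only, so you must either justify the interchange (Fubini plus differentiation under the integral sign, which needs more than continuity on $(0,1]$) or read the hypothesis as covering both orders of application. Second, and more substantively, your remainder terms are evaluated at $(\xi,s)$ for $i=0$ but at $(0,\eta_{m})$ for $i\ge 1$, with a different $\eta_{m}$ for each term; evaluation at $t=0$ violates the stated constraint $0<\xi\le t$ and is meaningful only as a right limit, since the order-$(n+1)$ derivatives are assumed continuous only on $(0,1]$. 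A single common pair $(\xi,\eta)$ cannot be extracted from your construction (the path-parametrization trick that yields it in the classical case is blocked by the absence of a fractional chain rule), so you are in fact proving the weaker ``term-dependent intermediate points'' version. You acknowledge this honestly, and that weaker version is all that is used downstream (Theorem \ref{uuzz} only needs the uniform bound $\widehat{M}$), but the proof should state explicitly that this is the form being established.
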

\begin{thm}\label{098}
Suppose that $\mathbb{\partial}_{t}^{i\theta}u(t)\in C(0,1], \ i=0,...,n+1$,  $0<\theta< 1$ and $u_{n}$ be the best approximation of $u$ in the space $\mathbb{W}^{\theta}_{n}$ defined by (\ref{tyyo}), then the following error bound is valid
\begin{equation}\label{bon}
\Vert u-u_{n}\Vert_{\infty}  \leq \frac{M}{\Gamma((n+1)\theta+1)},
\end{equation}
where $M:=\underset{t\in (0,1]}{\max} \vert \mathbb{\partial}_{t}^{(n+1)\theta}u(t) \vert $  and $\Gamma(.)$ denotes the Gamma function.
\end{thm}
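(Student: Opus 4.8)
The plan is to bound the best-approximation error by comparing $u_n$ against a single explicit competitor drawn from the same approximation space, namely the truncated fractional Taylor polynomial, whose remainder is already controlled by Theorem \ref{MOpp}. The optimality of $u_n$ then does all the remaining work.

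First I would record that the span of the FLPs coincides with $\mathbb{M}^{\theta}_{n}$: since $P_{i,\theta}(t)=\sum_{j=0}^{i}C_{i,j}t^{j\theta}$ with $C_{i,i}\neq 0$, the change of basis between $\{P_{i,\theta}\}_{i=0}^{n}$ and $\{t^{i\theta}\}_{i=0}^{n}$ is triangular and invertible, so $\mathbb{W}^{\theta}_{n}=\mathbb{M}^{\theta}_{n}$. Consequently the fractional Taylor polynomial
\[
T_{n}u(t):=\sum_{i=0}^{n}\frac{\partial_{t}^{i\theta}u(0)}{\Gamma(i\theta+1)}\,t^{i\theta}
\]
belongs to $\mathbb{W}^{\theta}_{n}$ and is therefore an admissible approximant against which $u_{n}$ may be tested.

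Next I would apply Theorem \ref{MOpp} to $u$, which identifies the remainder $u(t)-T_{n}u(t)$ with $\dfrac{t^{(n+1)\theta}}{\Gamma((n+1)\theta+1)}\,\partial_{t}^{(n+1)\theta}u(\xi)$ for some $0<\xi\le t$. Taking absolute values, using $t^{(n+1)\theta}\le 1$ for $t\in(0,1]$ and $|\partial_{t}^{(n+1)\theta}u(\xi)|\le M$ from the definition of $M$, yields the uniform bound $\Vert u-T_{n}u\Vert_{\infty}\le M/\Gamma((n+1)\theta+1)$. Finally, invoking the optimality of $u_{n}$ over the whole space $\mathbb{W}^{\theta}_{n}$ gives $\Vert u-u_{n}\Vert\le\Vert u-T_{n}u\Vert$, and combining the two inequalities produces the claimed estimate (\ref{bon}).

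The step I expect to be the real obstacle is the transfer between norms: the coefficients in (\ref{tyyo}) make $u_{n}$ the minimizer in the weighted $L^{2}_{\omega_{\theta}}$-norm, whereas the target estimate is stated in $\Vert\cdot\Vert_{\infty}$. The clean comparison $\Vert u-u_{n}\Vert\le\Vert u-T_{n}u\Vert$ is only immediate in the norm in which $u_{n}$ is optimal, so the argument must either interpret $u_{n}$ as the best uniform approximant — so that the $L^{\infty}$ inequality holds directly against the competitor $T_{n}u$ — or insert an embedding/inverse estimate controlling $\Vert u-u_{n}\Vert_{\infty}$ by $\Vert u-u_{n}\Vert_{L^{2}_{\omega_{\theta}}}$ on the finite-dimensional space $\mathbb{W}^{\theta}_{n}$. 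I expect the authors to take the first route and state the optimality directly in $\Vert\cdot\Vert_{\infty}$, which is what makes the short chain of inequalities close cleanly.
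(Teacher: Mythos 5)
Your proposal matches the paper's proof exactly: the authors observe that the truncated fractional Taylor polynomial $\sum_{i=0}^{n}\frac{\partial_{t}^{i\theta}u(0)}{\Gamma(i\theta+1)}t^{i\theta}$ lies in $\mathbb{W}^{\theta}_{n}$, compare $u_{n}$ against it via the best-approximation property in $\Vert\cdot\Vert_{\infty}$, and bound the Taylor remainder from Theorem \ref{MOpp} by $M/\Gamma((n+1)\theta+1)$. The norm-mismatch concern you raise is legitimate but is silently passed over in the paper, which asserts $\Vert u-u_{n}\Vert_{\infty}\leq\Vert u-\sum_{i=0}^{n}\frac{\partial_{t}^{i\theta}u(0)}{\Gamma(i\theta+1)}t^{i\theta}\Vert_{\infty}$ directly even though $u_{n}$ as defined in (\ref{tyyo}) is optimal only in the weighted $L^{2}_{\omega_{\theta}}$ norm.
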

\begin{proof}
We have
\[
\sum_{i=0}^{n}\frac{\mathbb{\partial}_{t}^{i\theta}u(0)}{\Gamma(i\theta+1)}t^{i\theta} \in \mathbb{W}^{\theta}_{n},
\]
therefore, from Theorem \ref{MOpp} and assuming $u_{n}$ be the best approximation of $u$ we can write
\[
 \Vert u-u_{n} \Vert _{\infty} \leq \Vert u-\sum_{i=0}^{n}\frac{\mathbb{\partial}_{t}^{i\theta}u(0)}{\Gamma(i\theta+1)}t^{i\theta}\Vert_{\infty}\leq \frac{M}{\Gamma((n+1)\theta+1)}.
\]
\end{proof}
\begin{thm}\label{uuzz} \cite{Maleknejad}
Suppose that
\[
 \mathbb{\partial}_{t}^{i\theta}\mathbb{\partial}_{s}^{j\theta}U(t,s)\in C(0,1], \ i,j=0,...,n+1.
\]
and $U_{n}$ be the best approximation of $U$ in the space $\widehat{\mathbb{W}}^{\theta}_{n}$ defined by (\ref{TWo}), then the following error bound is presented
\begin{equation}\label{bon1}
\Vert U-U_{n} \Vert_{\infty}\leq \widehat{M}\sum_{i=0}^{n+1}\frac{1}{\Gamma((n+1-i)\theta+1)\Gamma(i\theta+1)}
\end{equation}
where
\[
\widehat{M}=\underset{0\leq i\leq n+1}{\max}\  \underset{t,s \in [0,1]}{\max} \vert \mathbb{\partial}_{t}^{(n+1-i)\theta}\mathbb{\partial}_{s}^{i\theta}U(t,s) \vert.
\]
\end{thm}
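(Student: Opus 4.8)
The plan is to mirror the argument of Theorem \ref{098}, replacing the one-variable fractional Taylor expansion by its two-variable counterpart. The central observation is that the polynomial part of the two-variable fractional Taylor series,
\[
\widetilde{U}_n(t,s) := \sum_{i=0}^{n}\sum_{j=0}^{i}\frac{\partial_t^{(i-j)\theta}\partial_s^{j\theta}U(0,0)}{\Gamma((i-j)\theta+1)\Gamma(j\theta+1)}\,t^{(i-j)\theta}s^{j\theta},
\]
belongs to the tensor space $\widehat{\mathbb{W}}^{\theta}_{n}=\mathbb{W}^{\theta}_{n}\otimes\mathbb{W}^{\theta}_{n}$. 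First I would verify this membership, since it is what drives the whole estimate: in each summand the exponents satisfy $0\le i-j\le n$ and $0\le j\le n$, so every term has the form $t^{k\theta}s^{l\theta}$ with $0\le k,l\le n$, which lies in $\mathbb{M}^{\theta}_{n}\otimes\mathbb{M}^{\theta}_{n}=\widehat{\mathbb{W}}^{\theta}_{n}$ (recall $\mathbb{M}^{\theta}_{n}=\mathbb{W}^{\theta}_{n}$, as the $P_{i,\theta}$ are polynomials of degree $i$ in $t^{\theta}$).

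Next, because $U_n$ is the best approximation of $U$ in $\widehat{\mathbb{W}}^{\theta}_{n}$ and $\widetilde{U}_n\in\widehat{\mathbb{W}}^{\theta}_{n}$, optimality of $U_n$ gives $\Vert U-U_n\Vert_\infty\le\Vert U-\widetilde{U}_n\Vert_\infty$, and it remains to bound the right-hand side. By the two-variable fractional Taylor expansion, $U-\widetilde{U}_n$ equals exactly the remainder term
\[
\sum_{i=0}^{n+1}\frac{\partial_t^{(n+1-i)\theta}\partial_s^{i\theta}U(\xi,\eta)}{\Gamma((n+1-i)\theta+1)\Gamma(i\theta+1)}\,t^{(n+1-i)\theta}s^{i\theta},
\]
for suitable $\xi\in(0,t]$ and $\eta\in(0,s]$. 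Applying the triangle inequality to this finite sum, each derivative factor is bounded in absolute value by $\widehat{M}$ by definition, while for $t,s\in[0,1]$ the monomials satisfy $t^{(n+1-i)\theta}s^{i\theta}\le 1$ because the exponents are nonnegative. This yields $\Vert U-\widetilde{U}_n\Vert_\infty\le\widehat{M}\sum_{i=0}^{n+1}\frac{1}{\Gamma((n+1-i)\theta+1)\Gamma(i\theta+1)}$, which combined with the previous inequality gives (\ref{bon1}).

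The only genuinely delicate point is the passage $\Vert U-U_n\Vert_\infty\le\Vert U-\widetilde{U}_n\Vert_\infty$. The coefficients $a_{ij}$ in (\ref{TWo}) define the $L^2$-orthogonal projection onto $\widehat{\mathbb{W}}^{\theta}_{n}$, so strictly speaking $U_n$ minimizes the $L^2$ error rather than the supremum norm, and optimality in one norm does not automatically transfer to the other. To make the step rigorous one should either read \emph{best approximation} in the norm in which the estimate is claimed, or insert the standard equivalence-of-norms / inverse-inequality bound valid on the finite-dimensional space $\widehat{\mathbb{W}}^{\theta}_{n}$. Once this is settled, everything else is routine bookkeeping of the double sum, exactly as in the one-variable case.
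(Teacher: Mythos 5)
Your proof is correct and follows exactly the route the paper itself uses for the one-variable analogue (Theorem \ref{098}); the paper states this two-variable version without proof, citing \cite{Maleknejad}, so your argument is the natural in-paper justification. Your caveat about the mismatch between the $L^2$-defined projection in (\ref{TWo}) and the sup-norm optimality step is well taken --- the same issue is present, unaddressed, in the paper's own proof of Theorem \ref{098} --- and the clean fix is the one you name: read ``best approximation'' in the uniform norm, since membership of the Taylor polynomial in $\widehat{\mathbb{W}}^{\theta}_{n}$ is all the estimate actually requires.
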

\section{Fractional recursive Tau method}\label{Sec3}
 The Tau method was introduced by Lanczos in 1938 \cite{Lanczos1}. The key idea of this method is to make a polynomial solution by adding a perturbation term  to the right-hand side of the problem.
The concept of the canonical polynomials was first introduced in this method as approximation solution basis.
 In 1969, Ortiz \cite{Ortiz} developed the recursive approach of the Tau method to a general class of ordinary differential equations.
  This approach to the Tau method was extended later to solve a certain class of functional problems see \cite{Khajah,ElDaou1} and references therein. In  \cite{camjournal, Calcolo} the authors studied and investigated the recursive approach to the Tau method to solve a class of weakly singular Volterra integral equations.
 In this section, we intend to carry out a new formulation of the Tau method to solve Eq. (\ref{1}) following the idea in \cite{Calcolo}.  Define the following notations:
\begin{align}
\left\{
  \begin{array}{ll}
 \delta:=lcm(q_{1},q_{2}),\ \ \\
 \theta_{1}:=\frac{\delta}{q_{1}},\ \ \ \theta_{2}:=\frac{\delta}{q_{2}},\\
 \sigma_{1}:=p_{1}\theta_{1},  \ \ \ \sigma_{2}:=p_{2}\theta_{2},\\
 \alpha:=\frac{1}{\delta},
  \end{array}
\right.
\end{align}
therefore, we can write
\[
\gamma=\sigma_{1}\alpha,\ \ \ \beta=\sigma_{2}\alpha.
\]
Assume that $\widetilde{g}(t)\in \mathbb{W}^{\alpha}_{n}$ and $\widetilde{H}(t,s)\in \widehat{\mathbb{W}}^{\alpha}_{n}$ are the best approximation of $H(t,s)$  and $g(t)$, respectively,
\begin{align}\label{90h}
&H(t,s)\simeq \widetilde{H}(t,s)=\sum_{i=0}^{n}\sum_{j=0}^{n}a_{ij}P_{i,\alpha}(t)P_{j,\alpha}(s)=\sum_{i,j=0}^{n}h_{i,j}t^{i \alpha}s^{j\alpha},\nonumber\\
& g(t)\simeq \widetilde{g}(t)=\sum_{i=0}^{n}g_{i}t^{i\alpha}.
\end{align}
Now, we define a linear operator $ \widetilde{\mathcal{L}} $ (called cordial operator) related to kernel function $\widetilde{H}(t,s)$ as follows
\begin{equation}\label{bpb}
\left\{
  \begin{array}{ll}
 \widetilde{\mathcal{L}}:\mathbb{M}^{\alpha}_{n}\rightarrow \mathbb{M}^{\theta}_{m},\ \ (n\leq m), \ \\
 (\widetilde{\mathcal{L}}y)(t):=y(t)-\displaystyle\int_{0}^{t}t^{-\sigma_{2}\alpha}(t-s)^{\sigma_{1}\alpha -1}s^{(\sigma_{2}-\sigma_{1})\alpha}\widetilde{H}(t,s)y(s)ds.
  \end{array}
\right.
\end{equation}
By applying $ \widetilde{\mathcal{L}} $ on $t^{r\alpha}$, we have
 \begin{align}\label{ZBN}
\widetilde{\mathcal{L}}(t^{r\alpha})&=t^{r\alpha}- \displaystyle   \int_{0}^{t}t^{-\sigma_{2}\alpha}(t-s)^{\sigma_{1}\alpha-1}s^{(\sigma_{2}-\sigma_{1})\alpha}\widetilde{H}(t,s)y(s)ds \nonumber\\
&=t^{r\alpha}-\displaystyle\sum_{i,j=0}^{n}h_{i,j}B\left(\sigma_{1}\alpha, (r+j+\sigma_{2}-\sigma_{1})\alpha +1\right)t^{(r+i+j)\alpha}
\nonumber\\
&:=\displaystyle\sum_{\ell=r}^{r+\vartheta_{n}}S_{\ell,r}t^{\ell\alpha},\ \ \ \  \vartheta_{n}\in \lbrace 0,1,..., 2n\rbrace.
\end{align}
 The main idea of the Tau method \cite{Ortiz} is to find a polynomial solution $y_{n}(t)\in \mathbb{M}^{\alpha}_{n}$ which is the exact solution (called Tau-solution) of the perturbed problem
\begin{align}\label{HOO}
\widetilde{\mathcal{L}}y_{n}(t)=\widetilde{g}(t)+\mathcal{H}_{n}(t).
\end{align}
The polynomial $ \mathcal{H}_{n}(t)$ is called a perturbation term. It follows from (\ref{ZBN}) that
\[
\deg \big[\widetilde{\mathcal{L}}y_{n}(t)-\widetilde{g}(t)\big]\leq (n+\vartheta_{n})\alpha,
\]
in which $\vartheta_{n}$ is called the height of the operator $\widetilde{\mathcal{L}}$. Thus, $ \mathcal{H}_{n}(t)$  can be defined in terms of FLPs as follows
\begin{equation}\label{bm1}
\mathcal{H}_{n}(t)=\sum_{r\in \mathcal{S}}^{}\tau_{n,r}P_{n+\vartheta_{n}-r,\alpha}(t).
\end{equation}
The set $\mathcal{S}$ and the unknown parameters $\tau_{n,r}$ are determined when finding the Tau-solution of Eq. (\ref{HOO}). Define the cordial Volterra integral operator with respect to $\widetilde{H}(t,s)$ as
 \begin{equation}
\widetilde{\mathcal{K}}y(t):=\int_{0}^{t}t^{-\beta}(t-s)^{\gamma -1}s^{\beta-\gamma}\widetilde{H}(t,s)y(s)ds.
\end{equation}
According to Theorem (\ref{ml}), the sufficient condition for the existence of unique solution to the perturbed problem (\ref{HOO}) is that
$1 \notin \bigtriangleup_{\widetilde{\mathcal{K}}}$, i.e.,
\begin{equation}\label{Uniq}
h_{0,0}B(\gamma,1-\gamma+\beta+r\alpha)\neq 1,\ \ \ \ \ \ r=0,1,...\ .
\end{equation}
\begin{defn}\label{CBN} For all $r\geq 0$, the $\varphi_{r}(t)$ are called fractional canonical polynomials (FC-polynomials) associated with a linear operator if $\widetilde{\mathcal{L}}$ if
\begin{equation*}
(\widetilde{\mathcal{L}}\varphi_{r})(t)=t^{r\alpha}.
\end{equation*}
\end{defn}
\begin{thm}
Assume that $\vartheta_{n}>0$ and all of the above notations and condition (\ref{Uniq}) hold. Then the FC-polynomials are generated by a recursive relation of the form
\begin{align}\label{Y2}
  \varphi_{r+\vartheta_{n}}(t)=\displaystyle\frac{1}{S_{r+\vartheta_{n},r}}\left(t^{r\alpha}-\displaystyle\sum_{\ell=r}^{r+\vartheta_{n}-1}S_{\ell,r}\varphi_{\ell}(t) \right),\ \ \ r=0,1,...,\ .
\end{align}
 In particular, for $\vartheta_{n}=0$ we have
\begin{equation}\label{vmmn}
 \varphi_{r}(t)=\frac{1}{S_{r,r}}t^{r\alpha}, \ \ \ r=0,1,...,
\end{equation}
in which
\[
 S_{r,r}=1-h_{0,0}B\left(\sigma_{1}\alpha, (r+\sigma_{2}-\sigma_{1})\alpha +1\right).
\]
\end{thm}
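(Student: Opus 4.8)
The plan is to read off the recursion directly from the degree-raising structure of $\widetilde{\mathcal{L}}$ recorded in (\ref{ZBN}), after first establishing that $\widetilde{\mathcal{L}}$ is injective on the polynomial space. Injectivity is what will allow me to pass from a relation among the images $\widetilde{\mathcal{L}}(t^{r\alpha})$ to a relation among the canonical polynomials $\varphi_r$ themselves.

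First I would record the triangular structure of the operator. By (\ref{ZBN}), the image $\widetilde{\mathcal{L}}(t^{r\alpha})$ is a combination of $t^{r\alpha},\dots,t^{(r+\vartheta_n)\alpha}$ whose lowest-order coefficient is $S_{r,r}=1-h_{0,0}B(\sigma_1\alpha,(r+\sigma_2-\sigma_1)\alpha+1)$. Since $\gamma=\sigma_1\alpha$ and $\beta=\sigma_2\alpha$, condition (\ref{Uniq}) is exactly the requirement $S_{r,r}\neq 0$ for all $r\ge 0$. I would then argue that $\widetilde{\mathcal{L}}$ is injective: for a nonzero $p\in\mathbb{M}^\alpha_n$ whose lowest monomial is $t^{r_0\alpha}$, the lowest-order term of $\widetilde{\mathcal{L}}p$ is a nonzero multiple of $S_{r_0,r_0}t^{r_0\alpha}$, so $\widetilde{\mathcal{L}}p\neq 0$. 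I expect this injectivity step to be the point requiring the most care, because the natural alternative---a term-by-term induction verifying $\widetilde{\mathcal{L}}\varphi_{r+\vartheta_n}=t^{(r+\vartheta_n)\alpha}$---stumbles on the fact that the first $\vartheta_n$ polynomials $\varphi_0,\dots,\varphi_{\vartheta_n-1}$ are not themselves produced by (\ref{Y2}).

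With injectivity in hand, the recursion follows mechanically. I would substitute the defining identity $t^{\ell\alpha}=\widetilde{\mathcal{L}}\varphi_\ell(t)$ of Definition \ref{CBN} into every term of (\ref{ZBN}), use the linearity of $\widetilde{\mathcal{L}}$ to collect the result as $\widetilde{\mathcal{L}}$ applied to a single polynomial, and cancel $\widetilde{\mathcal{L}}$ by injectivity to obtain
$$t^{r\alpha}=S_{r+\vartheta_n,r}\,\varphi_{r+\vartheta_n}(t)+\sum_{\ell=r}^{r+\vartheta_n-1}S_{\ell,r}\,\varphi_\ell(t).$$
Solving for $\varphi_{r+\vartheta_n}(t)$ then gives (\ref{Y2}). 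The division by $S_{r+\vartheta_n,r}$ is legitimate because this is the leading coefficient of the degree-raising map, which is nonzero by the very definition of $\vartheta_n$ as the height of $\widetilde{\mathcal{L}}$ (concretely $S_{r+\vartheta_n,r}=-h_{n,n}B(\sigma_1\alpha,(r+n+\sigma_2-\sigma_1)\alpha+1)$ in the generic case $\vartheta_n=2n$, where positivity of the Beta function forces nonvanishing whenever $h_{n,n}\neq 0$).

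Finally, for the degenerate case $\vartheta_n=0$ the sum in (\ref{ZBN}) reduces to $\widetilde{\mathcal{L}}(t^{r\alpha})=S_{r,r}t^{r\alpha}$, and the same cancellation---or a one-line direct check that $t^{r\alpha}/S_{r,r}$ maps to $t^{r\alpha}$ under $\widetilde{\mathcal{L}}$---yields (\ref{vmmn}) together with the stated value of $S_{r,r}$. I would close by remarking that the recursion presupposes the first $\vartheta_n$ canonical polynomials as initial data; these are the residual polynomials of the classical Tau construction and are supplied separately, after which (\ref{Y2}) propagates them to all higher indices.
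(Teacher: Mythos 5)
Your proof is correct, and its computational core is the same as the paper's: substitute the defining identity $t^{\ell\alpha}=\widetilde{\mathcal{L}}\varphi_{\ell}(t)$ into (\ref{ZBN}), invoke linearity, and isolate the top-degree term. The difference is the logical direction. The paper argues \emph{constructively}: it shows that $\widetilde{\mathcal{L}}$ applied to $\frac{1}{S_{r+\vartheta_{n},r}}\bigl(t^{r\alpha}-\sum_{\ell=r}^{r+\vartheta_{n}-1}S_{\ell,r}\varphi_{\ell}(t)\bigr)$ equals $t^{(r+\vartheta_{n})\alpha}$, so this expression \emph{is} $\varphi_{r+\vartheta_{n}}$ by Definition \ref{CBN} --- no injectivity is needed, since the definition only asks that the image be $t^{(r+\vartheta_{n})\alpha}$. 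You instead assume $\varphi_{r+\vartheta_{n}}$ exists, cancel $\widetilde{\mathcal{L}}$ by injectivity, and obtain (\ref{Y2}) as a uniqueness statement. Your injectivity lemma (lowest-order coefficient $S_{r_{0},r_{0}}\neq 0$, which is exactly condition (\ref{Uniq})) is correct and is a genuine addition: it is what guarantees the canonical polynomials are uniquely determined, a point the paper leaves implicit. One caveat: your cancellation step applies $\widetilde{\mathcal{L}}$ to $\varphi_{\ell}$ for $\ell<\vartheta_{n}$, but these ``undefined'' canonical polynomials are, in the paper's later development (\ref{zcv})--(\ref{LO}), purely formal symbols that need not lie in the polynomial space on which you proved injectivity; so strictly your argument either needs injectivity extended to that formal span or should be read, as the paper does, as generating the $\varphi_{r+\vartheta_{n}}$ from the prescribed initial data. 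You flag this initial-data issue yourself at the end, so the gap is one of emphasis rather than substance.
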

\begin{proof}
By using of Definition \ref{CBN} in  (\ref{ZBN}) we get
\begin{align*}\label{pop}
\widetilde{\mathcal{L}}(t^{r\alpha})&=\sum_{\ell=r}^{r+\vartheta_{n}}S_{\ell,r}t^{\ell\alpha}=S_{r+\vartheta_{n},r}t^{(r+\vartheta_{n})\alpha}+\sum_{\ell=r}^{r+\vartheta_{n}-1}S_{\ell,r}\widetilde{\mathcal{L}}\varphi_{r}(t).
 \end{align*}
From the linearity of operator $\widetilde{\mathcal{L}}$ we obtain
 \begin{align*}
\widetilde{\mathcal{L}}\left(t^{r\alpha}-\sum_{\ell=r}^{r+\vartheta_{n}-1}S_{\ell,r}\varphi_{\ell}(t)\right)=S_{r+\vartheta_{n},r}t^{(r+\vartheta_{n})\alpha},
 \end{align*}
so
\begin{equation*}
\varphi_{r+\vartheta_{n}}(t)=\frac{1}{S_{r+\vartheta_{n},r}}\left(t^{r\alpha}-\sum_{\ell=r}^{r+\vartheta_{n}-1}S_{\ell,r}\varphi_{\ell}(t) \right), \ \  r=0,1,...,
\end{equation*}
because of Definition \ref{CBN}.
In view of the above process, we can derive (\ref{vmmn}) for $\vartheta_{n}=0$.
\end{proof}
From relation (\ref{Y2}), the FC-polynomials $\lbrace \varphi_{r}(t)\rbrace_{r=\vartheta_{n}}^{\infty} $ are generated by the finite set of polynomials $\lbrace \varphi_{0}(t),...,\varphi_{\vartheta_{n}-1}(t)\rbrace$ are called undefined FC-polynomial. Therefore, one can rewrite them as follows
\begin{equation}\label{zcv}
\varphi_{r}(t)=\psi_{r}(t)+\sum_{j=0}^{\vartheta_{n}-1}d_{r,j}\varphi_{j}(t),
\end{equation}
where $\psi_{r}(t)$ are called associated FC-polynomials. Now, set
\begin{align}\label{Y1}
  \psi_{r}(t)=0,\ \ \ \displaystyle d_{r,j}=\delta_{r,j};\ \ \ r,j=0,...,\vartheta_{n}-1,
\end{align}
in which  $\delta_{r,j}$  is the Kronecker delta function.  Applying  (\ref{zcv}) in (\ref{Y2}) yields
\begin{align}\label{01899}
\psi_{r+\vartheta_{n}}(t)+\sum_{j=0}^{\vartheta_{n}-1}d_{r+\vartheta_{n},j}\varphi_{j}(t)&=\frac{1}{S_{r+\vartheta_{n},r}}\left(t^{r\alpha}-\sum_{\ell=r}^{r+\vartheta_{n}-1}S_{\ell,r}\left(\psi_{\ell}(t)+\sum_{j=0}^{\vartheta_{n}-1}d_{\ell,j}\varphi_{j}(t) \right)\right)\nonumber\\
&=\frac{1}{S_{r+\vartheta_{n},r}}\left(t^{r\alpha}-\sum_{\ell=r}^{r+\vartheta_{n}-1}S_{\ell,r}\psi_{\ell}(t)\right)\nonumber\\
&+\sum_{j=0}^{\vartheta_{n}-1}\left(\frac{-1}{S_{r+\vartheta_{n},r}}\left(\sum_{\ell=r}^{r+\vartheta_{n}-1}d_{\ell,j} S_{\ell,r}\right)\right)\varphi_{j}(t).
\end{align}
Comparison of the both sides of (\ref{01899}) gives
\begin{align}\label{LO}
\left\{
  \begin{array}{ll}
\psi_{r+\vartheta_{n}}(t)=\displaystyle\frac{1}{S_{r+\vartheta_{n},r}}\left(t^{r\alpha}-\displaystyle\sum_{\ell=r}^{r+\vartheta_{n}-1}S_{\ell,r}\psi_{\ell}(t) \right),\ \ r \geq 0,\\
  \displaystyle d_{r+\vartheta_{n},j}=\frac{-1}{S_{r+\vartheta_{n},r}}\left(\sum_{\ell=r}^{r+\vartheta_{n}-1}d_{\ell,j} S_{\ell,r}\right),\ \ \ r\geq 0,\ \
  j=0,...,\vartheta_{n}-1.
  \end{array}
\right.
\end{align}
The relations (\ref{Y1}) and (\ref{LO}) allow to generate the FC-polynomials by a simple recursive procedure.

\begin{thm} Assume that $\mathcal{H}_{n}(t)$ is of the from (\ref{bm1}) and all of the above notations and condition (\ref{Uniq}) hold. Then, the exact polynomial solution (Tau-solution) of Eq. (\ref{HOO}) is given by
 \begin{align}\label{mn}
y_{n}(t)=\sum_{r=0}^{n}g_{r}\psi_{r}(t)+\sum_{r=0}^{\vartheta_{n}-1}\tau_{n,r}\bigg(\sum_{j=0}^{n+\vartheta_{n}-r}C_{n+\vartheta_{n}-r,j}\psi_{j}(t)\bigg),
\end{align}
where $\tau_{n,r}$ are determined by solving the $\vartheta_{n}$-dimensional linear system of algebraic equations (called Tau-system)
\begin{equation}\label{Sys}
\mathcal{M}\overline{\tau}=\mathcal{B},
\end{equation}
where
\[
\mathcal{M}=\displaystyle\left(
  \begin{array}{cccc}
   \displaystyle \sum_{j=0}^{n+\vartheta_{n}}C_{n+\vartheta_{n},j}d_{j,0} &  \displaystyle \sum_{j=0}^{n+\vartheta_{n}-1}C_{n+\vartheta_{n}-1,j}d_{j,0} & \cdots &  \displaystyle \sum_{j=0}^{n+1}C_{n+1,j}d_{j,0} \\
     \displaystyle \sum_{j=0}^{n+\vartheta_{n}}C_{n+\vartheta_{n},j}d_{j,1} &  \displaystyle \sum_{j=0}^{n+\vartheta_{n}-1}C_{n+\vartheta_{n}-1,j}d_{j,1} & \cdots &  \displaystyle \sum_{j=0}^{n+1}C_{n+1,j}d_{j,1} \\
    \vdots & \vdots & \vdots & \vdots \\
 \displaystyle \sum_{j=0}^{n+\vartheta_{n}}C_{n+\vartheta_{n},j}d_{j,\vartheta_{n}-1} &  \displaystyle \sum_{j=0}^{n+\vartheta_{n}-1}C_{n+\vartheta_{n}-1,j}d_{j,\vartheta_{n}-1} & \cdots &  \displaystyle \sum_{j=0}^{n+1}C_{n+1,j}d_{j,\vartheta_{n}-1} \\
  \end{array}
\right),
\]
\[
\overline{\tau}=(\tau_{n,0}, \tau_{n,1},\cdots,\tau_{n,\vartheta_{n}-1})^{T},
\]
and
\[
\mathcal{B}=\big(- \displaystyle\sum_{r=0}^{n}g_{r}d_{r,0}
    ,-\displaystyle\sum_{r=0}^{n}g_{r}d_{r,1}
    ,\cdots,
    -\displaystyle\sum_{r=0}^{n}g_{r}d_{r,\vartheta_{n}-1}
\big)^{T}.
\]
\end{thm}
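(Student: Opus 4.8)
The plan is to construct $y_{n}$ by inverting the cordial operator $\widetilde{\mathcal{L}}$ on the right-hand side of (\ref{HOO}) through the FC-polynomials, and then to pass from the basis $\{\varphi_{r}\}$ to the associated basis $\{\psi_{r}\}$; the elimination of the undefined FC-polynomials in this change of basis will produce the Tau-system (\ref{Sys}). First I would use the defining property of Definition \ref{CBN}, namely $\widetilde{\mathcal{L}}\varphi_{r}(t)=t^{r\alpha}$, combined with the explicit expansion $P_{i,\alpha}(t)=\sum_{j=0}^{i}C_{i,j}t^{j\alpha}$ and the linearity of $\widetilde{\mathcal{L}}$, so that every Legendre block in the perturbation term (\ref{bm1}) is the $\widetilde{\mathcal{L}}$-image of an explicit combination of FC-polynomials. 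Writing the right-hand side of (\ref{HOO}) as $\sum_{r=0}^{n}g_{r}t^{r\alpha}+\sum_{r=0}^{\vartheta_{n}-1}\tau_{n,r}P_{n+\vartheta_{n}-r,\alpha}(t)$, this shows that
\begin{equation*}
y_{n}(t)=\sum_{r=0}^{n}g_{r}\varphi_{r}(t)+\sum_{r=0}^{\vartheta_{n}-1}\tau_{n,r}\sum_{j=0}^{n+\vartheta_{n}-r}C_{n+\vartheta_{n}-r,j}\varphi_{j}(t)
\end{equation*}
solves $\widetilde{\mathcal{L}}y_{n}=\widetilde{g}+\mathcal{H}_{n}$ for \emph{every} choice of the parameters $\tau_{n,r}$. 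This part is purely linearity and bookkeeping.

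Next I would substitute the decomposition (\ref{zcv}), $\varphi_{r}(t)=\psi_{r}(t)+\sum_{j=0}^{\vartheta_{n}-1}d_{r,j}\varphi_{j}(t)$, into this expression and collect, for each $k=0,\dots,\vartheta_{n}-1$, the coefficient of the undefined FC-polynomial $\varphi_{k}$. After rearranging the double sums, $y_{n}$ takes the form
\begin{equation*}
y_{n}(t)=\Bigg[\sum_{r=0}^{n}g_{r}\psi_{r}(t)+\sum_{r=0}^{\vartheta_{n}-1}\tau_{n,r}\sum_{j=0}^{n+\vartheta_{n}-r}C_{n+\vartheta_{n}-r,j}\psi_{j}(t)\Bigg]+\sum_{k=0}^{\vartheta_{n}-1}\Lambda_{k}\varphi_{k}(t),
\end{equation*}
with $\Lambda_{k}=\sum_{r=0}^{n}g_{r}d_{r,k}+\sum_{r=0}^{\vartheta_{n}-1}\tau_{n,r}\sum_{j=0}^{n+\vartheta_{n}-r}C_{n+\vartheta_{n}-r,j}d_{j,k}$. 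The bracketed term is exactly the claimed formula (\ref{mn}), and from the recursion (\ref{Y1})--(\ref{LO}) one checks that $\psi_{r}\equiv 0$ for $r<\vartheta_{n}$ while $\psi_{r}$ has $t^{\alpha}$-degree $r-\vartheta_{n}$ for $r\ge\vartheta_{n}$; hence the bracket already lies in $\mathbb{M}^{\alpha}_{n}$.

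The decisive step, which I expect to be the main obstacle, is to justify discarding the tail $\sum_{k}\Lambda_{k}\varphi_{k}$. Because $\varphi_{0},\dots,\varphi_{\vartheta_{n}-1}$ are the undefined FC-polynomials, not fixed by the recursion (\ref{Y2}) and hence genuine degrees of freedom, a well-defined Tau-solution in $\mathbb{M}^{\alpha}_{n}$ cannot depend on them; this forces $\Lambda_{k}=0$ for all $k=0,\dots,\vartheta_{n}-1$. These are $\vartheta_{n}$ linear equations in the $\vartheta_{n}$ unknowns $\tau_{n,0},\dots,\tau_{n,\vartheta_{n}-1}$: reading the coefficient of $\tau_{n,r}$ in $\Lambda_{k}$ as the $(k,r)$ entry $\sum_{j=0}^{n+\vartheta_{n}-r}C_{n+\vartheta_{n}-r,j}d_{j,k}$ identifies the matrix $\mathcal{M}$, while the constant part $-\sum_{r=0}^{n}g_{r}d_{r,k}$ gives the right-hand side $\mathcal{B}$. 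This is precisely the square system (\ref{Sys}); its solution $\overline{\tau}$ determines the parameters $\tau_{n,r}$ and, with condition (\ref{Uniq}) securing unique solvability of the perturbed problem (\ref{HOO}) via Theorem \ref{ml}, completes the proof.
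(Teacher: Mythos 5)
Your proposal is correct and follows essentially the same route as the paper: express the right-hand side of (\ref{HOO}) in powers of $t^{\alpha}$, pull it back through $\widetilde{\mathcal{L}}$ via the FC-polynomials, substitute the decomposition (\ref{zcv}), and annihilate the coefficients of the undefined $\varphi_{0},\dots,\varphi_{\vartheta_{n}-1}$ to obtain exactly the system (\ref{Sys}). The only cosmetic difference is that the paper argues by necessity (applying injectivity of $\widetilde{\mathcal{L}}$, guaranteed by condition (\ref{Uniq}) and Theorem \ref{ml}, to $\widetilde{\mathcal{L}}(y_{n}-\cdots)=0$), whereas you verify the same expression directly by construction; the substance is identical.
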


\begin{proof}
From relation (\ref{HOO}) and (\ref{bm1}), we have
 \begin{align}
\widetilde{\mathcal{L}}y_{n}(t)=\sum_{r=0}^{n}g_{r}t^{r\alpha}+\sum_{r\in \mathcal{S} }^{}\tau_{n,r}\bigg(\sum_{j=0}^{n+\vartheta_{n}-r}C_{n+\vartheta_{n}-r,j}t^{j\alpha}\bigg).
\end{align}
Thus by Definition of \ref{CBN} and linearity of $\widetilde{\mathcal{L}}$
 \begin{align}\label{bn1pp}
\widetilde{\mathcal{L}}\left(y_{n}(t)-\sum_{r\in \mathcal{S}}^{}\tau_{n,r}\bigg(\sum_{j=0}^{n+\vartheta_{n}-r}C_{n+\vartheta_{n}-r,j}\varphi_{j}(t)\bigg)-\sum_{r=0}^{n}g_{r}\varphi_{r}(t)\right)=0.
\end{align}
Because of Theorem \ref{ml}, i.e., $1 \notin \bigtriangleup_{\widetilde{\mathcal{K}}}$, we find
  \begin{equation}\label{opn}
y_{n}(t)=\sum_{r=0}^{n}g_{r}\varphi_{r}(t)+\sum_{r\in \mathcal{S}}^{}\tau_{n,r}\bigg(\sum_{j=0}^{n+\vartheta_{n}-r}C_{n+\vartheta_{n}-r,j}\varphi_{j}(t)\bigg),
\end{equation}
In view of relation (\ref{zcv}), we can rewrite (\ref{opn}) in the form
 \begin{align}\label{0521}
y_{n}(t)&=\sum_{r=0}^{n}g_{r}\psi_{r}(t)+\sum_{r\in \mathcal{S}}^{}\tau_{n,r}\bigg(\sum_{j=0}^{n+\vartheta_{n}-r}C_{n+\vartheta-r,j}\psi_{j}(t)\bigg)\nonumber\\
&+\sum_{\ell=0}^{\vartheta_{n}-1}\left(\sum_{r=0}^{n}g_{r}d_{r,\ell}+\sum_{r\in \mathcal{S}}^{}\tau_{n,r}\bigg(\sum_{j=0}^{n+\vartheta_{n}-r}C_{n+\vartheta_{n}-r,j}d_{j,\ell}\bigg) \right)\varphi_{\ell}(t).
\end{align}
In order to leave out the undefined canonical polynomials in (\ref{0521}) the parameters $\tau_{n,r}$ are determined such that the coefficients of $\varphi_{\ell}(t),\
\ell=0,...,\vartheta_{n}-1$ be equal to zero
\begin{align}\label{BBVV}
\sum_{r\in \mathcal{S}}^{}\tau_{n,r}\bigg(\sum_{j=0}^{n+\vartheta_{n}-r}C_{n+\vartheta_{n}-r,j}d_{j,\ell}\bigg)=-\sum_{r=0}^{n}g_{r}d_{r,\ell},\ \ \ \ell=0,...,\vartheta_{n}-1,
\end{align}
 therefore $\mathcal{S}=\lbrace 0,1,...,\vartheta_{n}-1\rbrace$ and Tau-system (\ref{Sys}) is derived. Finally,  the Tau-solution of  the perturbed problem (\ref{HOO}) becomes
\begin{align}
y_{n}(t)&=\sum_{r=0}^{n}g_{r}\psi_{r}(t)+\sum_{r=0}^{\vartheta_{n}-1}\tau_{n,r}\bigg(\sum_{j=0}^{n+\vartheta_{n}-r}C_{n+\vartheta-r,j}\psi_{j}(t)\bigg).
\end{align}
\end{proof}
\begin{cor}
From (\ref{Y1}) and (\ref{LO}), we have that
\begin{align*}
\deg[ \psi_{\vartheta_{n}}(t)]=0,\  \deg[\psi_{\vartheta_{n}+1}(t)]=\alpha,...,\deg[\psi_{r+\vartheta_{n}}(t)]=r\alpha,
\end{align*}
therefore,
\begin{align*}
\deg[y_{n}(t)]=\deg\bigg[\sum_{i=0}^{n}g_{i}\psi_{i}(t)+\sum_{i\in \mathcal{S}}^{}\tau_{n,i}\bigg(\sum_{j=0}^{n+\vartheta_{n}-i}C_{n+\vartheta_{n}-i,j}\psi_{j}(t)\bigg)\bigg]\leq n\alpha.
\end{align*}
\end{cor}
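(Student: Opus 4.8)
The plan is to first establish the fractional degree formula $\deg[\psi_{r+\vartheta_{n}}(t)]=r\alpha$ for every $r\geq 0$ by induction on $r$, and then substitute this into the closed form (\ref{mn}) of the Tau-solution to read off the bound on $\deg[y_{n}(t)]$. Throughout, $\deg$ refers to the fractional degree, so that $\deg[t^{k\alpha}]=k\alpha$, and I will repeatedly exploit that the initialization (\ref{Y1}) forces $\psi_{\ell}\equiv 0$ for $0\leq \ell\leq \vartheta_{n}-1$, together with the nonvanishing of the pivot $S_{r+\vartheta_{n},r}$, without which the recursion (\ref{LO}) would not even be defined.

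For the base case $r=0$, the recursion (\ref{LO}) reads $\psi_{\vartheta_{n}}(t)=\frac{1}{S_{\vartheta_{n},0}}\bigl(t^{0}-\sum_{\ell=0}^{\vartheta_{n}-1}S_{\ell,0}\psi_{\ell}(t)\bigr)$, and since every $\psi_{\ell}$ appearing in the sum vanishes by (\ref{Y1}), this collapses to the nonzero constant $\psi_{\vartheta_{n}}(t)=1/S_{\vartheta_{n},0}$, whence $\deg[\psi_{\vartheta_{n}}(t)]=0$. For the inductive step, I would assume $\deg[\psi_{r'+\vartheta_{n}}(t)]=r'\alpha$ for all $0\leq r'<r$. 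In the recursion for $\psi_{r+\vartheta_{n}}$ the subtracted terms are the $\psi_{\ell}$ with $r\leq \ell\leq r+\vartheta_{n}-1$; those with $\ell<\vartheta_{n}$ vanish, and for the remainder I write $\ell=r'+\vartheta_{n}$ with $0\leq r'\leq r-1$, so that the inductive hypothesis gives $\deg[\psi_{\ell}]=(\ell-\vartheta_{n})\alpha\leq (r-1)\alpha$. Thus every term in the sum has degree at most $(r-1)\alpha$, which is strictly below the degree $r\alpha$ of the explicit term $t^{r\alpha}$. Because $S_{r+\vartheta_{n},r}\neq 0$, no cancellation can annihilate the $t^{r\alpha}$ contribution, and therefore $\deg[\psi_{r+\vartheta_{n}}(t)]=r\alpha$, closing the induction.

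With the degree formula in hand, I would bound the two sums in (\ref{mn}) separately. In the first sum $\sum_{r=0}^{n}g_{r}\psi_{r}(t)$ only the indices $r\geq \vartheta_{n}$ contribute, and the top one is $\psi_{n}$ of degree $(n-\vartheta_{n})\alpha\leq n\alpha$. In the second sum, for each fixed $r\in\{0,\ldots,\vartheta_{n}-1\}$ the inner sum $\sum_{j=0}^{n+\vartheta_{n}-r}C_{n+\vartheta_{n}-r,j}\psi_{j}(t)$ has its highest surviving term at $j=n+\vartheta_{n}-r$, namely $\psi_{n+\vartheta_{n}-r}$, of degree $(n-r)\alpha$; maximizing over $r$ produces the largest value $n\alpha$ at $r=0$. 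Combining the two estimates yields $\deg[y_{n}(t)]\leq\max\{(n-\vartheta_{n})\alpha,\,n\alpha\}=n\alpha$, as claimed.

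The argument is essentially routine degree-bookkeeping, and the one point genuinely requiring care—the step I would treat as the main obstacle—is justifying that the leading term $t^{r\alpha}$ survives in (\ref{LO}). This rests on two facts that must be invoked explicitly: the vanishing of the initial polynomials $\psi_{0},\ldots,\psi_{\vartheta_{n}-1}$, so that the low-index terms drop out instead of polluting the degree count, and the nonvanishing of the pivot $S_{r+\vartheta_{n},r}$. Once the strict degree gap between $t^{r\alpha}$ and each subtracted $\psi_{\ell}$ is secured, both the induction and the final maximization follow immediately.
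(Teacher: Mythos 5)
Your argument is correct and is exactly the routine degree-bookkeeping the paper intends: the corollary is stated without proof as an immediate consequence of (\ref{Y1}) and (\ref{LO}), and your induction on $r$ (using $\psi_{0}=\cdots=\psi_{\vartheta_{n}-1}=0$ and $S_{r+\vartheta_{n},r}\neq 0$ so the leading term $t^{r\alpha}$ survives) followed by maximizing degrees over the two sums in (\ref{mn}) is the natural formalization of that claim.
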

\begin{cor}For $\vartheta_{n}=0$, the Tau-solution is obtained as follows
 \[
 y_{n}(t)=\sum_{r=0}^{n}g_{r}\varphi_{r}(t).
 \]
 \end{cor}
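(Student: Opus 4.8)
The plan is to treat this as the degenerate, zero-height specialization of the general Tau construction. First I would observe that setting $\vartheta_{n}=0$ collapses the expansion (\ref{ZBN}) to its single surviving term, $\widetilde{\mathcal{L}}(t^{r\alpha})=S_{r,r}\,t^{r\alpha}$, so that $\widetilde{\mathcal{L}}$ acts diagonally on the fractional monomial basis $\lbrace t^{r\alpha}\rbrace$. Accordingly the FC-polynomials reduce to the scaled monomials $\varphi_{r}(t)=S_{r,r}^{-1}t^{r\alpha}$ recorded in (\ref{vmmn}), where $S_{r,r}=1-h_{0,0}B(\sigma_{1}\alpha,(r+\sigma_{2}-\sigma_{1})\alpha+1)\neq 0$ is guaranteed nonzero by the uniqueness condition (\ref{Uniq}). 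One then checks directly from Definition \ref{CBN} that $\widetilde{\mathcal{L}}\varphi_{r}(t)=t^{r\alpha}$.

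The key structural point I would make next is that $\vartheta_{n}=0$ empties the index set of the perturbation term. Since $\mathcal{S}=\lbrace 0,1,\ldots,\vartheta_{n}-1\rbrace$, taking $\vartheta_{n}=0$ gives $\mathcal{S}=\emptyset$; hence the sum in (\ref{bm1}) is vacuous and $\mathcal{H}_{n}(t)\equiv 0$. There are therefore no undefined FC-polynomials to eliminate and no $\tau$-parameters to determine, so the Tau-system (\ref{Sys}) is empty, and the perturbed problem (\ref{HOO}) reduces to the exact identity $\widetilde{\mathcal{L}}y_{n}(t)=\widetilde{g}(t)$.

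Finally I would verify the claimed formula by direct substitution. Writing $\widetilde{g}(t)=\sum_{r=0}^{n}g_{r}t^{r\alpha}$ and using the linearity of $\widetilde{\mathcal{L}}$ together with $\widetilde{\mathcal{L}}\varphi_{r}(t)=t^{r\alpha}$, the candidate $y_{n}(t)=\sum_{r=0}^{n}g_{r}\varphi_{r}(t)$ satisfies $\widetilde{\mathcal{L}}y_{n}(t)=\sum_{r=0}^{n}g_{r}t^{r\alpha}=\widetilde{g}(t)$, so it solves (\ref{HOO}). Uniqueness of the polynomial solution, supplied once more by (\ref{Uniq}) via Theorem \ref{ml} (which ensures $1\notin\bigtriangleup_{\widetilde{\mathcal{K}}}$), then identifies this candidate as the Tau-solution. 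Equivalently, the same answer drops out of the general representation (\ref{mn}): the $\tau$-sum there is empty because $\mathcal{S}=\emptyset$, and the relation (\ref{zcv}) degenerates to $\varphi_{r}=\psi_{r}$ since its correction sum over $j=0,\ldots,\vartheta_{n}-1$ is likewise vacuous, leaving exactly $y_{n}(t)=\sum_{r=0}^{n}g_{r}\varphi_{r}(t)$.

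I expect no genuine obstacle; the result is a clean corollary of the preceding theorem. The only point requiring care is to recognize that the vanishing of the height $\vartheta_{n}$ simultaneously empties $\mathcal{S}$, nullifies the perturbation term, and trivializes the Tau-system, so that the ansatz already solves the equation \emph{exactly} with no perturbation needed, which is precisely why the $\tau$-contribution disappears from the Tau-solution.
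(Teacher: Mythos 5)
Your proposal is correct and follows essentially the same route as the paper, which states this corollary as the immediate specialization of the preceding theorem: taking $\vartheta_{n}=0$ empties $\mathcal{S}=\lbrace 0,\ldots,\vartheta_{n}-1\rbrace$, so the perturbation term (\ref{bm1}) and the Tau-system (\ref{Sys}) vanish, the FC-polynomials take the diagonal form (\ref{vmmn}) with $S_{r,r}\neq 0$ by (\ref{Uniq}), and the representation (\ref{opn}) collapses to $y_{n}(t)=\sum_{r=0}^{n}g_{r}\varphi_{r}(t)$. Your additional direct-substitution check via $\widetilde{\mathcal{L}}\varphi_{r}(t)=t^{r\alpha}$ and uniqueness through $1\notin\bigtriangleup_{\widetilde{\mathcal{K}}}$ is a harmless elaboration of the same argument, not a different method.
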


\begin{cor}
For $g\equiv 0$, from relation (\ref{mn}) we have
\[
 y_{n}\equiv 0 \ \ \Leftrightarrow  \ \ \ \overline{\tau}={\mathbf{0}}.
 \]
\end{cor}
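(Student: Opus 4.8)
The plan is to treat the two implications separately, the substantive one being that $y_n\equiv 0$ forces $\overline{\tau}=\mathbf{0}$. First I would specialize relation (\ref{mn}) to $g\equiv 0$. Since $\widetilde{g}$ is the best approximation of $g$, the hypothesis $g\equiv 0$ gives $g_r=0$ for every $r$, so the first sum in (\ref{mn}) disappears and the Tau-solution collapses to
\[
y_n(t)=\sum_{r=0}^{\vartheta_n-1}\tau_{n,r}\,\Phi_r(t),\qquad \Phi_r(t):=\sum_{j=0}^{n+\vartheta_n-r}C_{n+\vartheta_n-r,j}\,\psi_j(t).
\]
The implication $\overline{\tau}=\mathbf{0}\Rightarrow y_n\equiv 0$ is then immediate, since every surviving term carries a factor $\tau_{n,r}$.

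For the converse I would establish that the polynomials $\Phi_0,\dots,\Phi_{\vartheta_n-1}$ are linearly independent, so that $y_n\equiv 0$ can only hold when all $\tau_{n,r}$ vanish. The key structural fact comes from (\ref{Y1}) and (\ref{LO}): the associated polynomials satisfy $\psi_j\equiv 0$ for $0\le j\le\vartheta_n-1$, while for $j\ge\vartheta_n$ the recursion (\ref{LO}) shows that $\psi_j$ has exact degree $(j-\vartheta_n)\alpha$ with leading coefficient $1/S_{j,\,j-\vartheta_n}\neq 0$ (these divisors being nonzero by the very definition of $\vartheta_n$ as the height of $\widetilde{\mathcal{L}}$, which is what makes (\ref{LO}) well defined); this is precisely the content of the degree corollary preceding this one. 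Feeding this into the definition of $\Phi_r$, the top summand $j=n+\vartheta_n-r$ contributes $C_{n+\vartheta_n-r,\,n+\vartheta_n-r}\,\psi_{n+\vartheta_n-r}(t)$, whose degree is $(n-r)\alpha$ and whose leading coefficient is nonzero because $C_{i,i}=\binom{2i}{i}\neq 0$, while all lower indices $j$ contribute strictly lower degree. Hence $\deg\Phi_r=(n-r)\alpha$ exactly.

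I would then finish by the standard triangular argument: as $r$ runs over $0,1,\dots,\vartheta_n-1$ the degrees $n\alpha,(n-1)\alpha,\dots,(n-\vartheta_n+1)\alpha$ are pairwise distinct, so the $\Phi_r$ are linearly independent. Concretely, matching the coefficient of $t^{n\alpha}$ in $y_n\equiv 0$ isolates $\tau_{n,0}$ and forces $\tau_{n,0}=0$; matching the coefficient of $t^{(n-1)\alpha}$ then forces $\tau_{n,1}=0$, and so on down to $\tau_{n,\vartheta_n-1}=0$, giving $\overline{\tau}=\mathbf{0}$. The main obstacle is purely the degree bookkeeping: one has to confirm that $\psi_j$ genuinely vanishes below index $\vartheta_n$ and grows in degree by exactly $\alpha$ per step above it, and that none of the relevant leading coefficients $1/S_{j,\,j-\vartheta_n}$ or $C_{i,i}$ can degenerate to zero, so that the degrees of the $\Phi_r$ are truly distinct and the triangular elimination never stalls.
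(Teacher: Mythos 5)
Your proposal is correct, and it is worth noting that the paper itself states this corollary with no proof at all (all four corollaries following the Tau-solution theorem are asserted as immediate consequences), so there is no ``paper proof'' to match against; your argument is the natural justification of the only nontrivial implication. The direction $\overline{\tau}=\mathbf{0}\Rightarrow y_{n}\equiv 0$ is indeed trivial once $g\equiv 0$ kills the first sum in (\ref{mn}). For the converse, your degree bookkeeping checks out against the preceding corollary: $\psi_{j}\equiv 0$ for $j<\vartheta_{n}$, and for $j\ge\vartheta_{n}$ the recursion (\ref{LO}) gives $\psi_{j}$ exact degree $(j-\vartheta_{n})\alpha$ with leading coefficient $1/S_{j,\,j-\vartheta_{n}}$, so each $\Phi_{r}$ has exact degree $(n-r)\alpha$ with nonzero leading coefficient $C_{n+\vartheta_{n}-r,\,n+\vartheta_{n}-r}/S_{n+\vartheta_{n}-r,\,n-r}$ (and $C_{i,i}=\binom{2i}{i}\neq 0$), whence the $\Phi_{r}$ are linearly independent and the triangular elimination forces every $\tau_{n,r}$ to vanish. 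You are also right to flag the two implicit hypotheses: the nonvanishing of the pivots $S_{r+\vartheta_{n},r}$, which the paper assumes silently whenever it divides by them in (\ref{Y2}) and (\ref{LO}), and --- one point you could make fully explicit --- the requirement $n\ge\vartheta_{n}-1$, since for $r>n$ the polynomial $\Phi_{r}$ would involve only $\psi_{j}$ with $j<\vartheta_{n}$ and hence be identically zero, in which case $y_{n}\equiv 0$ would no longer determine $\tau_{n,r}$; this degenerate regime lies outside the setting the paper works in, but it is the one place where the stated equivalence could fail.
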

\begin{cor}
The dimension of the Tau-system remains fixed and independent of
 the degree of the Tau-solution if $H(t,s)\in \widehat{\mathbb{W}}^{\alpha}_{p}$ for $p\in \mathds{N}_{0}$.
 \end{cor}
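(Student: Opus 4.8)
The plan is to trace the dimension of the Tau-system (\ref{Sys}) back to the height $\vartheta_{n}$ of the operator $\widetilde{\mathcal{L}}$, and then to show that under the hypothesis $H\in\widehat{\mathbb{W}}^{\alpha}_{p}$ this height no longer grows with $n$. From the statement accompanying (\ref{Sys}), the Tau-system is $\vartheta_{n}$-dimensional, since $\mathcal{S}=\lbrace 0,1,\dots,\vartheta_{n}-1\rbrace$ has exactly $\vartheta_{n}$ elements. Hence it suffices to prove that $\vartheta_{n}$ is constant for all sufficiently large $n$.

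First I would examine how $\vartheta_{n}$ arises in (\ref{ZBN}). Applying $\widetilde{\mathcal{L}}$ to $t^{r\alpha}$ produces the monomials $t^{(r+i+j)\alpha}$ weighted by $-h_{i,j}\,B(\sigma_{1}\alpha,(r+j+\sigma_{2}-\sigma_{1})\alpha+1)$, where the indices $i,j$ range exactly over those carrying a coefficient $h_{i,j}$ of $\widetilde{H}$. Because the Beta factor is strictly positive for every admissible $r$, the coefficient of $t^{(r+i+j)\alpha}$ vanishes precisely when $h_{i,j}=0$. Consequently $\vartheta_{n}$ equals the largest value of $i+j$ taken over all nonzero coefficients $h_{i,j}$ of $\widetilde{H}$; in the generic situation $\widetilde{H}\in\widehat{\mathbb{W}}^{\alpha}_{n}$ this bound is $2n$, which is exactly why $\vartheta_{n}$ can grow with $n$.

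Next I would invoke the hypothesis $H\in\widehat{\mathbb{W}}^{\alpha}_{p}$. Since $\widehat{\mathbb{W}}^{\alpha}_{p}\subseteq\widehat{\mathbb{W}}^{\alpha}_{n}$ whenever $n\ge p$, the best approximation of $H$ in $\widehat{\mathbb{W}}^{\alpha}_{n}$ coincides with $H$ itself, i.e. $\widetilde{H}(t,s)=H(t,s)=\sum_{i,j=0}^{p}h_{i,j}\,t^{i\alpha}s^{j\alpha}$ with $h_{i,j}=0$ whenever $\max(i,j)>p$. Hence the indices contributing to (\ref{ZBN}) are confined to $0\le i,j\le p$, so the highest surviving monomial in $\widetilde{\mathcal{L}}(t^{r\alpha})$ is $t^{(r+\vartheta)\alpha}$ with $\vartheta:=\max\lbrace i+j:\ 0\le i,j\le p,\ h_{i,j}\neq 0\rbrace\le 2p$. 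This quantity no longer references $n$, so $\vartheta_{n}=\vartheta$ is a fixed constant for every $n\ge p$. Combined with the first step, the Tau-system retains the fixed dimension $\vartheta$, independent of the degree $n\alpha$ of the Tau-solution, which is the desired claim.

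The only delicate point is the passage in the third step: I must be sure that the coefficients $h_{i,j}$ genuinely stay unchanged as $n$ varies, not merely that their index range is capped. This is guaranteed by the fact that orthogonal best-approximation onto the nested family $\widehat{\mathbb{W}}^{\alpha}_{n}$ reproduces any element already lying in the smaller subspace $\widehat{\mathbb{W}}^{\alpha}_{p}$. Once this observation is in place, the conclusion follows immediately from the monomial bookkeeping in (\ref{ZBN}), with no further estimates or auxiliary constructions required.
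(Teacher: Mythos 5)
Your argument is correct and is exactly the reasoning the paper leaves implicit: the Tau-system is $\vartheta_{n}\times\vartheta_{n}$, and once $n\ge p$ the best approximation $\widetilde{H}$ coincides with $H$, so the nonzero coefficients $h_{i,j}$ (hence the height $\vartheta_{n}\le 2p$ read off from (\ref{ZBN})) no longer depend on $n$. The only cosmetic imprecision is the claim that the coefficient of $t^{(r+i+j)\alpha}$ vanishes ``precisely when $h_{i,j}=0$'' --- several pairs $(i,j)$ with the same sum contribute to one $S_{\ell,r}$ and could in principle cancel --- but this only affects the exact value of $\vartheta_{n}$, not its independence of $n$, so the conclusion stands.
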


\section{Convergence  analysis }\label{Sec4}
In this section, we provide a convergence analysis of the method.
\begin{thm}
Let $y_{n}$ and $y$ be the solution of Eqs. (\ref{HOO}) and (\ref{100}),  and $\widetilde{g}$ and $\widetilde{H}$ be the best approximation of the functions $g$ and $H$, respectively. Then,
\[
\Vert y-y_{n}\Vert _{\infty}\rightarrow 0,\ \ \ n\rightarrow \infty.
\]
\end{thm}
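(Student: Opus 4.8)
The plan is to recast both problems as fixed points of cordial operators and to reduce the statement to three approximation estimates plus a stability bound. Writing $\mathcal{L}=I-\mathcal{K}$ and $\widetilde{\mathcal{L}}=I-\widetilde{\mathcal{K}}$, Eq. (\ref{100}) reads $\mathcal{L}y=g$, while the perturbed problem (\ref{HOO}) reads $\widetilde{\mathcal{L}}y_{n}=\widetilde{g}+\mathcal{H}_{n}$. Substituting the exact solution $y$ into $\widetilde{\mathcal{L}}$ and using $\mathcal{L}y=g$ gives the residual identity
\[
\widetilde{\mathcal{L}}y-\widetilde{g}=(g-\widetilde{g})+(\mathcal{K}-\widetilde{\mathcal{K}})y=:\rho_{n}.
\]
Subtracting the two equations yields $\widetilde{\mathcal{L}}(y-y_{n})=\rho_{n}-\mathcal{H}_{n}$, hence
\[
\Vert y-y_{n}\Vert_{\infty}\le\Vert\widetilde{\mathcal{L}}^{-1}\Vert\big(\Vert g-\widetilde{g}\Vert_{\infty}+\Vert(\mathcal{K}-\widetilde{\mathcal{K}})y\Vert_{\infty}+\Vert\mathcal{H}_{n}\Vert_{\infty}\big),
\]
so it suffices to control these four factors.

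For the cordial terms I would use the substitution $s=tx$ that turns $\mathcal{K}$ (and $\widetilde{\mathcal{K}}$) into $\int_{0}^{1}\Theta(x)H(t,tx)y(tx)\,dx$ with the integrable core $\Theta(x)=x^{\beta-\gamma}(1-x)^{\gamma-1}$ of Theorem \ref{ml}, since the $t$-powers cancel exactly. This gives $\Vert\mathcal{K}\Vert\le\Vert H\Vert_{\infty}B(\beta-\gamma+1,\gamma)$ and, more importantly,
\[
\Vert(\mathcal{K}-\widetilde{\mathcal{K}})y\Vert_{\infty}\le B(\beta-\gamma+1,\gamma)\,\Vert y\Vert_{\infty}\,\Vert H-\widetilde{H}\Vert_{\infty},\qquad \Vert\mathcal{K}-\widetilde{\mathcal{K}}\Vert\le B(\beta-\gamma+1,\gamma)\,\Vert H-\widetilde{H}\Vert_{\infty}.
\]
By Theorem \ref{098} the data error $\Vert g-\widetilde{g}\Vert_{\infty}\to0$ and by Theorem \ref{uuzz} the kernel error $\Vert H-\widetilde{H}\Vert_{\infty}\to0$, so the first two factors vanish and $\Vert\mathcal{K}-\widetilde{\mathcal{K}}\Vert\to0$. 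The last fact secures the stability factor: since $1\notin\bigtriangleup_{\mathcal{K}}$ makes $\mathcal{L}=I-\mathcal{K}$ boundedly invertible, a Neumann-series perturbation argument shows that for $n$ large $\Vert\mathcal{K}-\widetilde{\mathcal{K}}\Vert\,\Vert\mathcal{L}^{-1}\Vert<1$, whence $\widetilde{\mathcal{L}}^{-1}$ exists with $\Vert\widetilde{\mathcal{L}}^{-1}\Vert\le\Vert\mathcal{L}^{-1}\Vert/(1-\Vert\mathcal{K}-\widetilde{\mathcal{K}}\Vert\,\Vert\mathcal{L}^{-1}\Vert)$ bounded uniformly in $n$.

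The remaining, and genuinely delicate, factor is $\Vert\mathcal{H}_{n}\Vert_{\infty}$. I would exploit that, by (\ref{bm1}) with $\mathcal{S}=\{0,\dots,\vartheta_{n}-1\}$, $\mathcal{H}_{n}$ is a combination of the FLPs $P_{n+1,\alpha},\dots,P_{n+\vartheta_{n},\alpha}$ only; since $\Vert P_{k,\alpha}\Vert_{\infty}=1$, it is precisely the high-degree tail (FLP-degrees $>n$) of the residual $\widetilde{\mathcal{L}}y_{n}-\widetilde{g}$, so the orthogonal FLP-projection $\Pi_{n}$ onto degrees $\le n$ annihilates it, $\Pi_{n}\mathcal{H}_{n}=0$. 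Comparing $y_{n}$ with the truncation $\Pi_{n}y$ of the exact solution, I subtract $\widetilde{\mathcal{L}}(\Pi_{n}y)-\widetilde{g}=:\eta_{n}$ from $\widetilde{\mathcal{L}}y_{n}-\widetilde{g}=\mathcal{H}_{n}$ and apply $\Pi_{n}$ to obtain
\[
(I-\Pi_{n}\widetilde{\mathcal{K}})(y_{n}-\Pi_{n}y)=-\Pi_{n}\eta_{n}.
\]
Because $\eta_{n}=\widetilde{\mathcal{L}}(\Pi_{n}y-y)+\rho_{n}\to0$ (using $\Vert\Pi_{n}y-y\Vert_{\infty}\to0$ from Theorem \ref{098} and boundedness of $\widetilde{\mathcal{L}}$), stability of the projected operator $(I-\Pi_{n}\widetilde{\mathcal{K}})^{-1}$ yields $\Vert y_{n}-\Pi_{n}y\Vert\to0$, and then $\mathcal{H}_{n}=\eta_{n}+\widetilde{\mathcal{L}}(y_{n}-\Pi_{n}y)\to0$, closing the argument.

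The hard part is exactly this last step: the uniform boundedness of $(I-\Pi_{n}\widetilde{\mathcal{K}})^{-1}$ together with the boundedness of $\Pi_{n}$ in the working norm. When $H(0,0)=0$ the operator $\mathcal{K}$ is compact (Theorem \ref{ml}) and classical collectively-compact projection theory applies directly. The subtle case is $H(0,0)\neq0$, where $\mathcal{K}$ is non-compact with the uncountable spectrum of Theorem \ref{ml}; there I cannot invoke compactness and must instead use the explicit diagonal action $\widetilde{\mathcal{K}}t^{\lambda}=\widehat{\Theta}(\lambda)t^{\lambda}$ and the separation condition (\ref{Uniq}) to bound the inverse on each polynomial space, which is the technical point requiring the most care.
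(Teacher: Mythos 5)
Your overall decomposition of the error into a data term $g-\widetilde g$, a kernel term $(\mathcal K-\widetilde{\mathcal K})y$, and the perturbation $\mathcal H_n$, together with the substitution $s=tx$ giving the bound $B(\gamma,\beta-\gamma+1)\,\Vert H-\widetilde H\Vert_\infty$ for the operator difference, coincides with the paper's argument (its inequality (\ref{6y}) is exactly your estimate, and Theorems \ref{098} and \ref{uuzz} are used for the same two terms). Where you genuinely differ is the stability mechanism: the paper writes the error equation $e_n=\mathcal K e_n+J_1+J_2-J_3-\mathcal H_n$ with $J_3=(\mathcal K-\widetilde{\mathcal K})e_n$ and invokes a Gronwall-type inequality for cordial kernels, whereas you invert $\widetilde{\mathcal L}=I-\widetilde{\mathcal K}$ directly via a Neumann-series perturbation of $\mathcal L^{-1}$. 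Your route is cleaner on one real point: the paper's $J_3$ depends on $e_n$ itself, so its claim $\Vert J_3\Vert_\infty\to 0$ is circular unless the term $C\,B(\gamma,\beta-\gamma+1)\Vert H-\widetilde H\Vert_\infty\Vert e_n\Vert_\infty$ is absorbed into the left-hand side for $n$ large; your uniform bound on $\Vert\widetilde{\mathcal L}^{-1}\Vert$ performs exactly that absorption.

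The unresolved point in both arguments is $\Vert\mathcal H_n\Vert_\infty\to 0$, and here your proposal leaves more machinery unproved than it acknowledges. The paper simply asserts the limit from $\mathcal H_n\in{\mathbb{W}^{\alpha}_{n}}^{\perp}$ and a ``diameter'' property quoted from \cite{camjournal}; that is not a proof, since a subspace constraint alone bounds nothing. You instead reduce the claim to two estimates: uniform invertibility of $I-\Pi_n\widetilde{\mathcal K}$ on $\mathbb{W}^{\alpha}_{n}$ and uniform sup-norm boundedness of the orthogonal projector $\Pi_n$. The second is false as stated: $\Pi_n$ is orthogonal in the $\omega_\alpha$-weighted $L^2$ inner product, and its Lebesgue constant in $\Vert\cdot\Vert_\infty$ grows with $n$ (of order $\sqrt n$ for Legendre-type systems), so you would need either to work in a weighted $L^2$ norm and convert at the end, or to exploit decay of the coefficients of the residual. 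The first is unavailable from collectively-compact projection theory precisely in the interesting case $H(0,0)\neq 0$, as you note; your suggestion to use the triangular action $\widetilde{\mathcal K}t^{r\alpha}=(1-S_{r,r})t^{r\alpha}+\cdots$ and the separation condition (\ref{Uniq}) is the right direction (it is exactly why the divisors $S_{r+\vartheta_n,r}$ in (\ref{Y2}) stay away from zero), but until that bound is established uniformly in $r\le n+\vartheta_n$ and in the relevant norm, the step $\Vert\mathcal H_n\Vert_\infty\to 0$ remains an assertion in your write-up just as it is in the paper's.
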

\begin{proof}
Subtracting (\ref{HOO}) from (\ref{100}) yields
\begin{equation}\label{ppv}
y-y_{n}=g-\widetilde{g}+\mathcal{K}y-\widetilde{\mathcal{K}}y_{n}-\mathcal{H}_{n}.
\end{equation}
By setting $ e_{n}=y-y_{n}$ we obtain
 \begin{align}\label{0p00}
\mathcal{K}y-\widetilde{\mathcal{K}}y_{n}&=\mathcal{K}y-\widetilde{\mathcal{K}}y+\widetilde{\mathcal{K}}(y-y_{n})=\mathcal{K}y-\widetilde{\mathcal{K}}y+\widetilde{\mathcal{K}}e_{n}
\end{align}
From (\ref{ppv}) and (\ref{0p00}) it follows that
\begin{align}
e_{n}=\int_{0}^{t}t^{-\beta}(t-s)^{\gamma -1}s^{\beta-\gamma}H(t,s)e_{n}(s)ds+J_{1}+J_{2}-J_{3}-\mathcal{H}_{n},
\end{align}
where
\[
J_{1}=g-\widetilde{g},\ \ J_{2}=(\mathcal{K}-\widetilde{\mathcal{K}})y,\ \
J_{3}=(\mathcal{K}-\widetilde{\mathcal{K}})e_{n}.
\]
Consequently,
\begin{equation}
\vert e_{n}(t)\vert \leq \Vert H \Vert_{\infty} \int_{0}^{t}t^{-\beta}(t-s)^{\gamma -1}s^{\beta-\gamma}\vert e_{n}(s)\vert ds+\vert J_{1}\vert +\vert J_{2}\vert+\vert J_{3}\vert+\vert\mathcal{H}_{n}\vert.
\end{equation}
By the Gronwall's inequality (Lemma 3.5; Ref. \cite{Xiaohua}), we obtain
\begin{equation}
\Vert e_{n}\Vert_{\infty} \leq C\bigg(  \Vert J_{1}\Vert_{\infty} +\Vert J_{2}\Vert_{\infty}+\Vert J_{3}\Vert_{\infty}+\Vert\mathcal{H}_{n}\Vert_{\infty}\bigg).
\end{equation}
By Theorem (\ref{098}), we obtain
\begin{equation}\label{bvvb}
\Vert J_{1}\Vert_{\infty}\leq \frac{M}{\Gamma((n+1)\theta+1)}\rightarrow 0,\ \ \ n\rightarrow \infty,
\end{equation}
where $M=\max \vert \mathbb{\partial}_{t}^{(n+1)\theta}g(t) \vert$. Thanks to Theorem (\ref{uuzz}), we have
\begin{align}\label{6y}
\vert \left(\mathcal{K}-\widetilde{\mathcal{K}}\right)u\vert  & \leq  \int_{0}^{t}t^{-\beta}(t-s)^{\gamma -1}s^{\beta-\gamma}\vert H(t,s)-\widetilde{H}(t,s)\vert \vert u(s)\vert ds\nonumber \\
&\leq \Vert H-\widetilde{H}\Vert_{\infty} \Vert u\Vert_\infty  B(\gamma,\beta-\gamma+1)\rightarrow 0, \ \ n\rightarrow \infty,
\end{align}
so,
\begin{equation}\label{mnmm}
\Vert J_{2}\Vert_{\infty}, \ \Vert J_{3}\Vert_{\infty}\rightarrow 0,\ \ \ n \rightarrow \infty.
\end{equation}
According to \cite{camjournal}, the sequence of dual spaces $\lbrace {\mathbb{W}^{\alpha}_{n}}^{\perp} \rbrace$ have the following property
\[
 \cdots {\mathbb{W}^{\alpha}_{n+2}}^{\perp}\subset {\mathbb{W}^{\alpha}_{n+1}}^{\perp}\subset {\mathbb{W}^{\alpha}_{n}}^{\perp};\ \ \ diam({\mathbb{W}^{\alpha}_{n}}^{\perp})\rightarrow 0,\ \ n\rightarrow \infty,
\]
therefore,
\begin{equation}\label{jo}
\Vert \mathcal{H}_{n}\Vert_{\infty} \rightarrow 0,\ \ \ n \rightarrow \infty,
\end{equation}
because of $\mathcal{H}_{n}(t) \in {\mathbb{W}^{\alpha}_{n}}^{\perp}$. Finally, from relations (\ref{bvvb})-(\ref{jo}) the desired result is obtained.
\end{proof}

\section{Numerical results}\label{Sec5}
This section contain some examples to illustrate the significance of the method. These examples were also studied in recent works \cite{Cai,Nemati,Allaei1,Shayanfard,Wang,Nemati2}. All of the numerical calculation are performed on computer using a program written in Maple 2018. Here, we state an algorithm to summarize the steps of the method:
\begin{algorithm}
\renewcommand{\thealgorithm}{}
\floatname{algorithm}{}
\caption{{\bf{Algorithm:}} Fractional recursive Tau method}
{\bf{Input:}} Function $f(t)$, $H(t,s)$ and the values of $\beta$ and $\gamma$.\\
{\bf{Step 1:}} Compute the values of  $\sigma_{1}$, $\sigma_{2}$ and $\alpha$.\\
{\bf{Step 2:}} Compute $\widetilde{\mathcal{L}}(t^{r\alpha})=\displaystyle\sum_{\ell=r}^{r+\vartheta_{n}}S_{\ell,r}t^{\ell\alpha}$.\\
{\bf{Step 3:}}  Construct $ \psi_{r}(t)$ and $ \displaystyle d_{r,j}$ for $r\geq 0$ and $j\in \mathcal{S}$.\\
{\bf{Step 4:}} Solve the linear system (\ref{Sys}).\\
{\bf{Output:}} Construct the Tau-solution $y_{n}(t)$.
%% the algorithm
\end{algorithm}
\begin{ex}\cite{Usta,Shayanfard} Consider the following third kind Volterra integral equation
\begin{equation}\label{p0}
t^{\frac{1}{2}}y(t)=t^{\frac{4}{2}}-B(\frac{1}{2},\frac{9}{2})t^{\frac{8}{2}}+\int_{0}^{t}(t-s)^{\frac{-1}{2}}s^{2}y(s)ds,
\end{equation}
with the exact solution $y(t)=t^{\frac{3}{2}}$.  Based on the Theorem \ref{ml}, the associated cordial Volterra integral operator is
\[
\mathcal{K}y(t)=\int_{0}^{t}t^{-\frac{1}{2}}(t-s)^{\frac{-1}{2}}s^{2}y(s)ds
\]
is compact and the Eq. (\ref{p0}) has unique solution.  Now, we apply the algorithm to obtain the Tau-solution. Thus
\begin{align}
\left\{
  \begin{array}{ll}
  \psi_{r+4}(t)=\displaystyle\frac{1}{k_{r+\vartheta,r}}\left(t^{r\alpha}-\displaystyle\sum_{\ell=r}^{r+\vartheta-1}k_{\ell,r}\psi_{\ell}(t) \right),\ \ r \geq 0,\\
  \displaystyle d_{r+4,j}=\frac{-1}{k_{r+\vartheta,r}}\left(\sum_{\ell=r}^{r+\vartheta-1}d_{\ell,j} k_{\ell,r}\right),\ \ \ r\geq 0,\ \
  j=0,...,3,
  \end{array}
\right.
\end{align}
where
\[
\psi_{0}(t)=\cdots=\psi_{3}(t)=0,\ \ \displaystyle d_{i,j}=\delta_{i,j},\ \ i,j=0,...,3,
\]
and so for $n=8$, we obtain $\tau_{8,0}=\tau_{8,1}=\tau_{8,2}=\tau_{8,3}=0$. Then,
\begin{align*}
y_{8}(t)&=\sum_{r=0}^{8}g_{r}\psi_{r}(t)+\sum_{r=0}^{3}\tau_{8,r}\bigg(\sum_{j=0}^{12-r}C_{12-r,j}\psi_{j}(t)\bigg),\\
&=t^{\frac{3}{2}}
\end{align*}
 which is the exact solution of Eq. (\ref{p0}). The numerical results of Refs. \cite{Usta,Shayanfard} show that the our method give the exact solution in comparison with numerical methods.
\end{ex}
%%%%%%%%%%%%%%%%%%%%%%%%%%%%%%%%%%%%%%%%%%%
%%%%%%%%%%%%%
\begin{figure}
 \centering
  \includegraphics[width=0.70\textwidth]{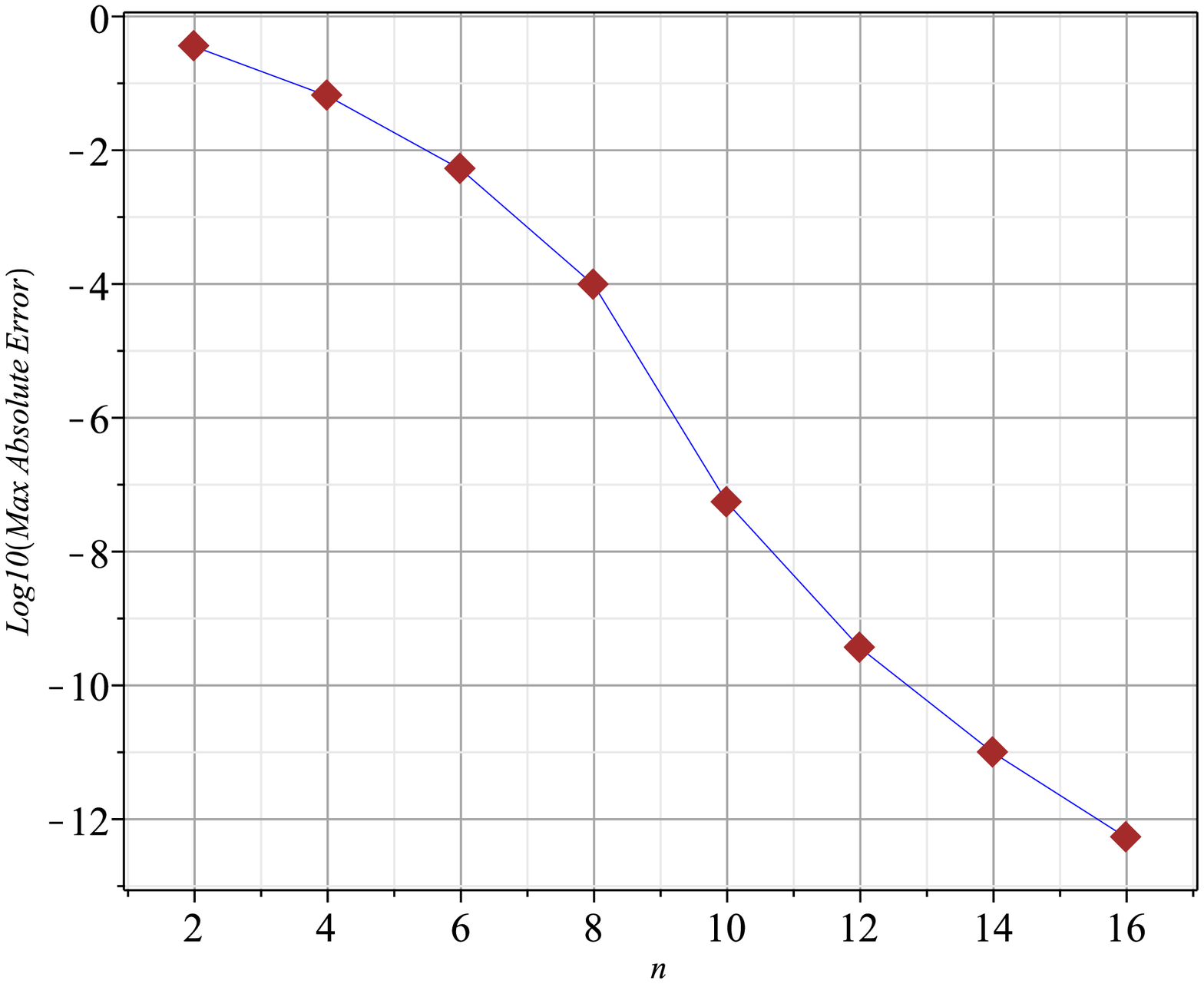}
\caption{The plot of $\Vert e \Vert_{n} $  for various $n$ for Example 2.}
\label{Af1}
\end{figure}
\begin{figure}
 \centering
  \includegraphics[width=1\textwidth]{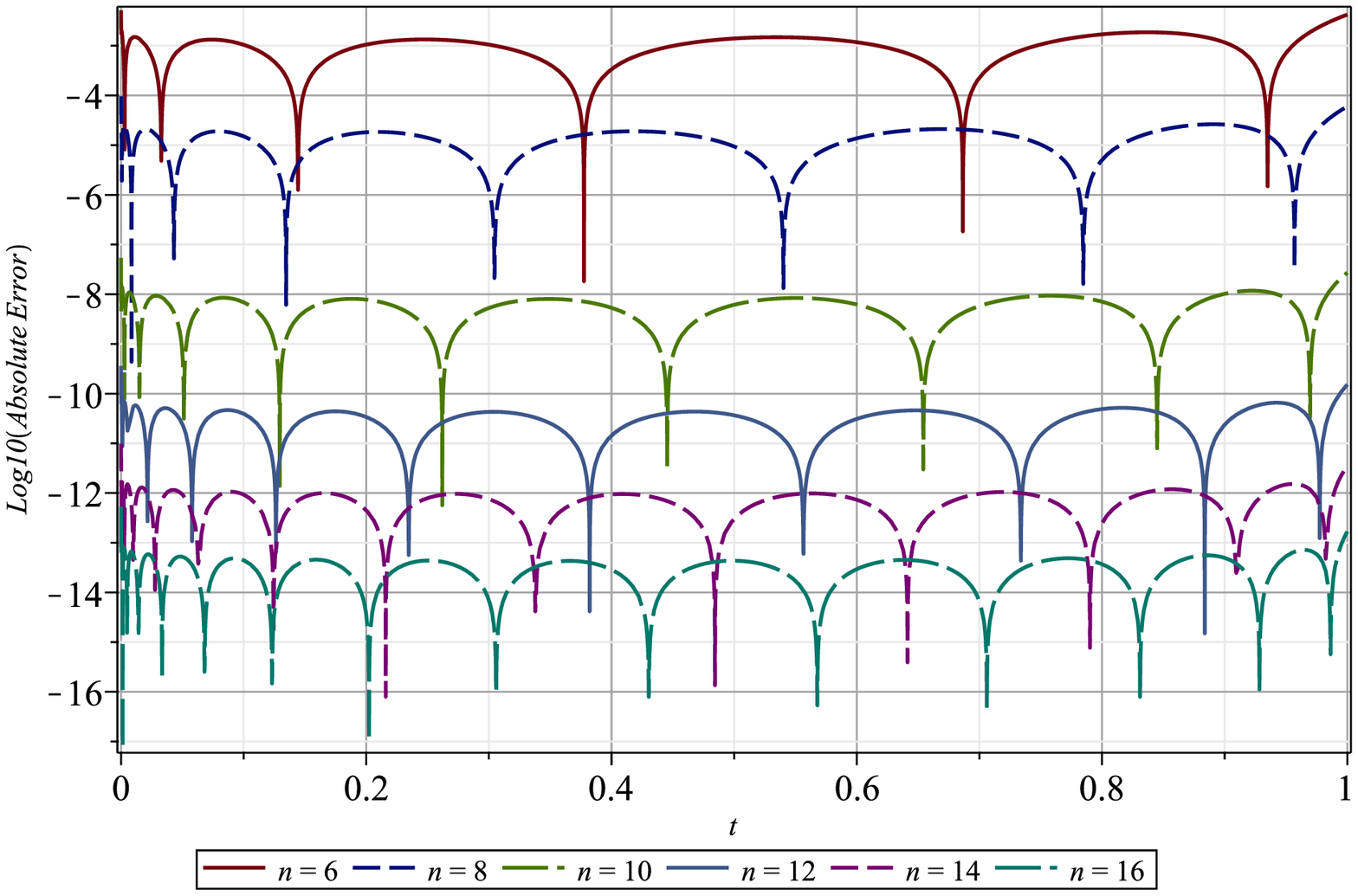}
\caption{Absolute error function $\vert e_{n}(t)\vert $  for various $n$ for Example 2.}
\label{Af1E}
\end{figure}
\begin{ex} Consider the following third kind Volterra integral equation
\begin{equation}\label{vn}
t^{\frac{2}{3}}y(t)=t^{\frac{47}{12}}\left(1-\frac{\Gamma(\frac{1}{3})\Gamma(\frac{55}{12})}{\pi\sqrt{3}\Gamma(\frac{59}{12})}\right)+\frac{\sqrt{3}}{3\pi}\int_{0}^{t}(t-s)^{\frac{-2}{3}}s^{\frac{1}{3}}y(s)ds,\ \ \
\end{equation}
and the exact solution $y(t)=t^{\frac{13}{4}}$.  The integral equation (\ref{vn}) is a well-known Lighthill model that describes the temperature distribution on the surface of a projectile moving through a laminar layer \cite{Lighthill}. Based on the Theorem \ref{ml}, the associated cordial Volterra integral operator
\[
\mathcal{K}y(t)=\int_{0}^{t}t^{-\frac{2}{3}}(t-s)^{\frac{-2}{3}}\frac{\sqrt{3}s^{\frac{1}{3}}}{3\pi}y(s)ds
\]
is non-compact with the uncountable spectrum
\[
\bigtriangleup_{\mathcal{K}}=\lbrace 0\rbrace \cup \lbrace \frac{\sqrt{3}}{3\pi}B(\frac{1}{3},1+\frac{1}{3}+\lambda); \ Re (\lambda)\geq 0 \rbrace,
\]
therefore, the Eq. (\ref{vn}) has unique solution if $1 \notin \bigtriangleup$, i.e.,
\[
1-\frac{\sqrt{3}}{3\pi}B(\frac{1}{3},1+\frac{1}{3}+\lambda)\neq 0.
\]
By implementation of the algorithm, we obtain
\[
\left\{
  \begin{array}{ll}
Q_{r}(t)=\frac{1}{1-\frac{\sqrt{3}}{3\pi}B\left(\frac{1}{3}, \frac{1+r}{3} +1\right)}t^{\frac{r}{3}},\ \ \ r\geq 0,\\
y_{n}(t)=\sum_{r=0}^{n}g_{r}Q_{r}(t).
  \end{array}
\right.
\]
The linear variation of error versus the degree of the Tau-solution in
semi-log representation is displayed in Fig. \ref{Af1}.
The behavior of absolute error function for different values of $n$ on the interval $[0,1]$ is shown in Fig. \ref{Af1E}.
Tab. \ref{TT11} shows the comparison of maximum absolute error of the method with the  Ref. \cite{Cai}. Also, the results of Tab. \ref{TT12} show the high accuracy of method in comparison with the other methods.

\begin{table}[!h]
\caption{Comparison the values of $\Vert e_{n}\Vert_{\infty} $ of our method and Ref.
\cite{Cai}  versus $n$ for Example 2}
\begin{center}
 \begin{tabular}{c||cccccc}
 \hline\noalign{\smallskip}
n &6 & 8&10&12&14&16   \\\hline\hline
Our method &5.17e-03&9.61e-05&5.37e-08&3.61e-10&9.79e-12&5.28e-13\\
Ref. \cite{Cai} &1.09e-05&1.48e-06&3.15e-07&8.84e-08&2.99e-08&7.71e-08\\
\hline
\noalign{\smallskip}
\end{tabular}
\end{center}
\label{TT11}
\end{table}
%%%%%%%%%%%%
\begin{table}[!h]
\caption{Comparison results of Example 2}
\begin{center}
 \begin{tabular}{c|cccc}
\hline\noalign{\smallskip}
 Our method& Ref. \cite{Allaei1} & Ref. \cite{Wang}&Ref\cite{Nemati} &Ref\cite{Nemati2}\\
 (n=14)&(m=3,\ N=256)&(m=3,N=256)&($\nu=\gamma$=0,\ M=5,\ k=6)&(n=192)\\\hline
9.79e-12&5.13e-9&3.66e-12&2.02-10&5.16e-9\\
\hline
\noalign{\smallskip}
\end{tabular}
\end{center}
\label{TT12}
\end{table}
\end{ex}
%%%%%%%%%%%%%5
%%%%%%%%%%%%%%%5
\begin{figure}
 \centering
  \includegraphics[width=0.80\textwidth]{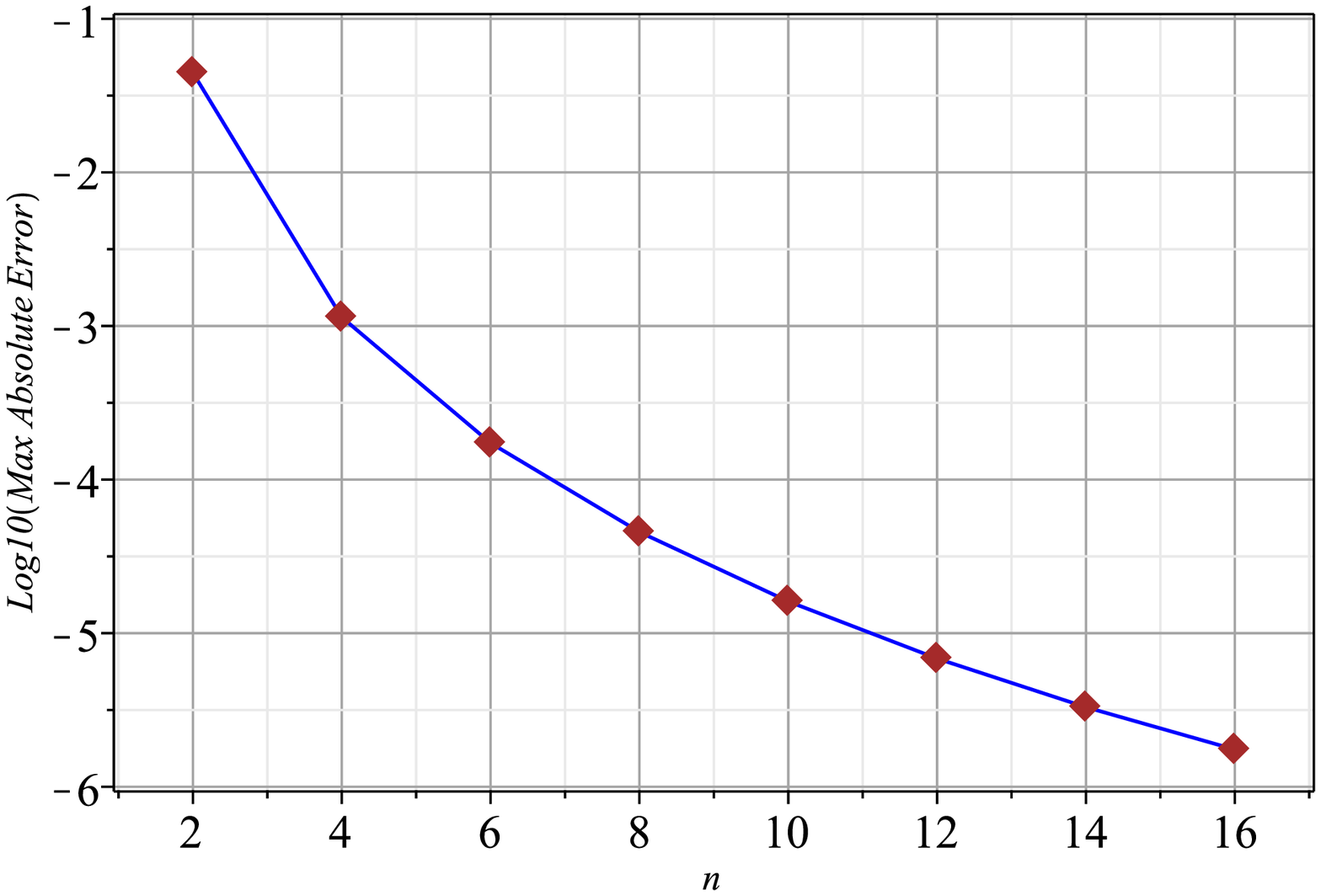}
\caption{The plot of $\Vert e \Vert_{n} $ for various $n$ for Example 3.}
\label{Af2}
\end{figure}
\begin{figure}
 \centering
  \includegraphics[width=1\textwidth]{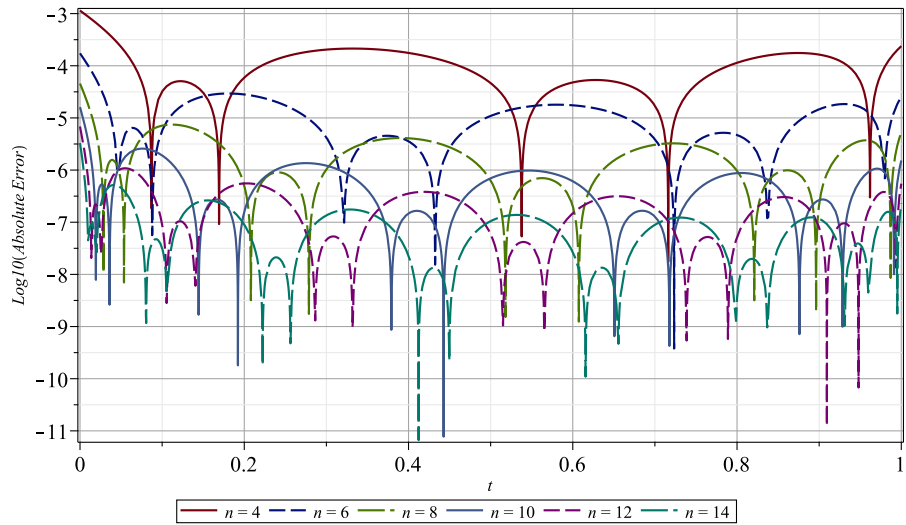}
\caption{Absolute error function $\vert e_{n}(t)\vert $  for various $n$ Example 3.}
\label{Af2E}
\end{figure}
\begin{ex}\cite{Allaei1}  Consider the following third kind Volterra integral equation
\begin{equation}\label{vn2}
ty(t)=\frac{6}{7}t^3\sqrt{t}+\int_{0}^{t}\frac{1}{2}y(s)ds,\ \ \
\end{equation}
with the exact solution $ y(t)=t^{\frac{5}{2}}$. This integral equations arise in the modeling of heat conduction problems with mixed-type boundary conditions problem. Based on the Theorem \ref{ml}, the associated cordial Volterra integral operator
\[
\mathcal{K}y(t)=\frac{1}{2}\int_{0}^{t}t^{-\frac{1}{2}}y(s)ds
\]
is non-compact with the uncountable spectrum
\[
\bigtriangleup_{\mathcal{K}}=\lbrace 0\rbrace \cup \lbrace \frac{1}{2(1+\lambda)}; \ Re (\lambda)\geq0 \rbrace,
\]
therefore, the Eq. (\ref{vn2}) has unique solution if $1 \notin \bigtriangleup$, i.e.,
\[
1-\frac{\sqrt{3}}{3\pi}B(\frac{1}{3},1+\frac{1}{3}+\lambda)\neq 0.
\]
The error of the Tau-solution for different values of $n$ are listed in Fig \ref{Af2} shows the spectral accuracy of our method for non-smooth solutions.
The behavior of absolute error function for different values of $n$ on the interval $[0,1]$ is shown in Fig. \ref{Af2E}.
Tab. \ref{TT21} and \ref{TT22} reports the efficiency of our method.
\begin{table}[!h]
\caption{The values of $\Vert e_{n}\Vert_{\infty} $ versus $n$ for Example 3}
\begin{center}
 \begin{tabular}{c|ccccccc}
 \hline\noalign{\smallskip}
n &4&6 & 8&10&12&14\\\hline
$\Vert e_{n}\Vert_{\infty} $ &1.14e-03&1.73e-04&4.56e-05&1.61e-05&6.84e-06&3.30e-6\\
\hline
\noalign{\smallskip}
\end{tabular}
\end{center}
\label{TT21}
\end{table}
\begin{table}[!h]
\caption{Comparison results of Example 3}
\begin{center}
 \begin{tabular}{c|cccc}
\hline\noalign{\smallskip}
 Our method& Ref. \cite{Allaei1} & Ref. \cite{Wang}&Ref\cite{Nemati} &Ref\cite{Nemati2}\\
 (n=20)&(m=2,\ N=256)&(m=2,N=256)&($\nu=\gamma$=0,\ M=5,\ k=6)&(n=192)\\\hline
6.02e-07&1.30e-5&2.46e-9&2.69-8&3.46e-8\\
\hline
\noalign{\smallskip}
\end{tabular}
\end{center}
\label{TT22}
\end{table}

\end{ex}
%%%%%%%%%%%%%%%%%%%%%%%%%%%%%%%%%%%
\begin{ex}\cite{Nemati}  Consider the following third kind Volterra integral equation
\begin{equation}\label{vn3}
t^{\frac{3}{2}}y(t)=t^{\frac{33}{10}}\left(1-\frac{\Gamma(\frac{19}{5})}{\sqrt{2\pi}\Gamma(\frac{43}{10})}\right)+\int_{0}^{t}\frac{\sqrt{2}}{2\pi}(t-s)^{\frac{-1}{2}}sy(s)ds;\ \ \
\end{equation}
with the exact solution $y(t)=t^{\frac{9}{5}}$. Based on the Theorem \ref{ml}, the associated cordial Volterra integral operator
\[
\mathcal{K}y(t)=\frac{\sqrt{2}}{2\pi}\int_{0}^{t}t^{-\frac{3}{2}}(t-s)^{\frac{-1}{2}}sy(s)ds
\]
is non-compact with the uncountable spectrum
\[
\bigtriangleup_{\mathcal{K}}=\lbrace 0\rbrace \cup \lbrace \frac{\sqrt{2}}{2\pi}B(\frac{1}{2},2+\lambda); \ Re (\lambda)\geq 0 \rbrace,
\]
therefore, the Eq. (\ref{vn3}) has unique solution if $1 \notin \bigtriangleup$, i.e.,
\[
1-\frac{\sqrt{2}}{2\pi}B(\frac{1}{2},2+\lambda)\neq 0.
\]
The numerical results of Fig \ref{Af3} and Tab. \ref{TT41} show the exponential rate of convergence of the method.
The behavior of absolute error function for different values of $n$ on the interval $[0,1]$ is shown in Fig. \ref{Af3E}.
In Tab. \ref{TT42},  the error norm of the method is compared with the error norm in the case of Jacobi wavelets method. The comparison results points up the accuracy of the method with a small number of basis FC-polynomials.
\begin{table}[!h]
\caption{The values of $\Vert e_{n}\Vert_{\infty} $ versus $n$ for Example 4}
\begin{center}
 \begin{tabular}{c|cccccccc}
 \hline\noalign{\smallskip}
n &6 & 8&10&12&14&16\\\hline
$\Vert e_{n}\Vert_{\infty}  $ &2.34e-05&3.08e-06&6.55e-07&1.86e-07&6.40e-8&2.54e-8\\
\hline
\noalign{\smallskip}
\end{tabular}
\end{center}
\label{TT41}
\end{table}
\begin{table}[!h]
\caption{Comparison results of Example 4}
\begin{center}
 \begin{tabular}{c|cccc}
\hline\noalign{\smallskip}
 Our method&& Ref. \cite{Nemati} with (M=5,\ k=5) \\
 (n=14)&($\nu=\gamma$=0.5 )&($\nu=\gamma$=0)&($\nu=\gamma$=-0.5)\\\hline
6.40e-8&4.99e-8&1.48e-7&4.99-7\\
\hline
\noalign{\smallskip}
\end{tabular}
\end{center}
\label{TT42}
\end{table}
\begin{figure}
 \centering
  \includegraphics[width=0.80\textwidth]{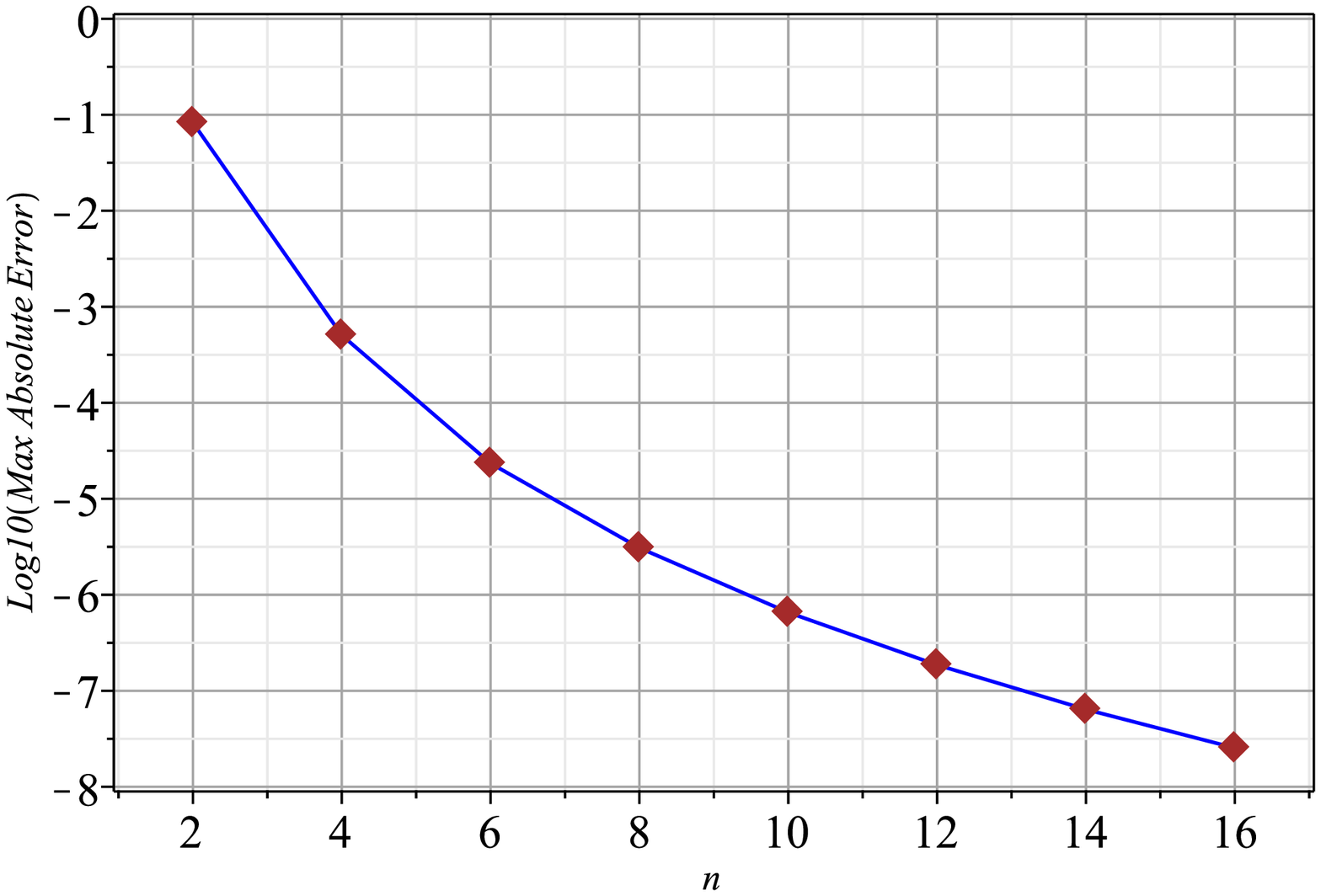}
\caption{The plot of $\Vert e \Vert_{n} $ for various $n$ for Example 4.}
\label{Af3}
\end{figure}
\begin{figure}
 \centering
  \includegraphics[width=1\textwidth]{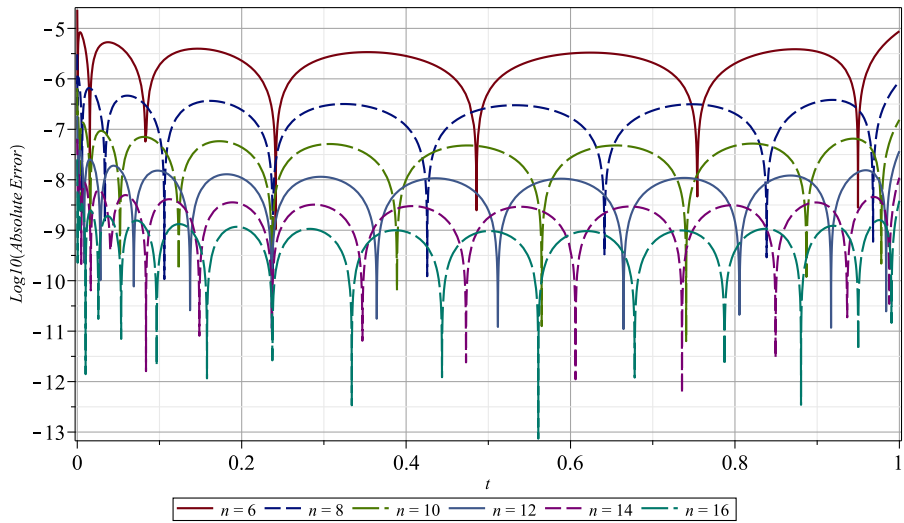}
\caption{Absolute error function $\vert e_{n}(t)\vert $  for various $n$ for Example 4.}
\label{Af3E}
\end{figure}
\end{ex}

\begin{ex}  Consider the following third kind Volterra integral equation
\begin{equation}\label{vn5}
ty(t)=g(t)+\int_{0}^{t}(t-s)^{\frac{-1}{3}}s^{2}y(s)ds
\end{equation}
 with the exact solution $y(t)=t^{\frac{1}{2}}\sin(t)$. Based on the Theorem \ref{ml}, the associated cordial Volterra integral operator
\[
\mathcal{K}y(t)=\int_{0}^{t}t^{-1}(t-s)^{\frac{-1}{3}}s^{2}y(s)ds
\]
is  compact and the  Eq. (\ref{vn5}) has unique solution. Fig. \ref{Af4} and  Tab. \ref{TT51} show the  error norm for different values of $n$. The behavior of absolute error function for different values of $n$ on the interval $[0,1]$ is shown in Fig. \ref{Af4E}.
They show that this method is effective even for small values of $n$.

\begin{table}[!h]
\caption{The values of $\Vert e_{n}\Vert_{\infty} $ versus $n$ for Example 5}
\begin{center}
 \begin{tabular}{c|cccccccc}
 \hline\noalign{\smallskip}
n &6 & 8&10&12&14&16\\\hline
$\Vert e_{n}\Vert_{\infty}  $ &8.29e-04&1.61e-05&2.64e-06&6.48e-08&4.18e-9&1.81e-9\\
\hline
\noalign{\smallskip}
\end{tabular}
\end{center}
\label{TT51}
\end{table}

\begin{figure}[ht]
 \centering
  \includegraphics[width=0.80\textwidth]{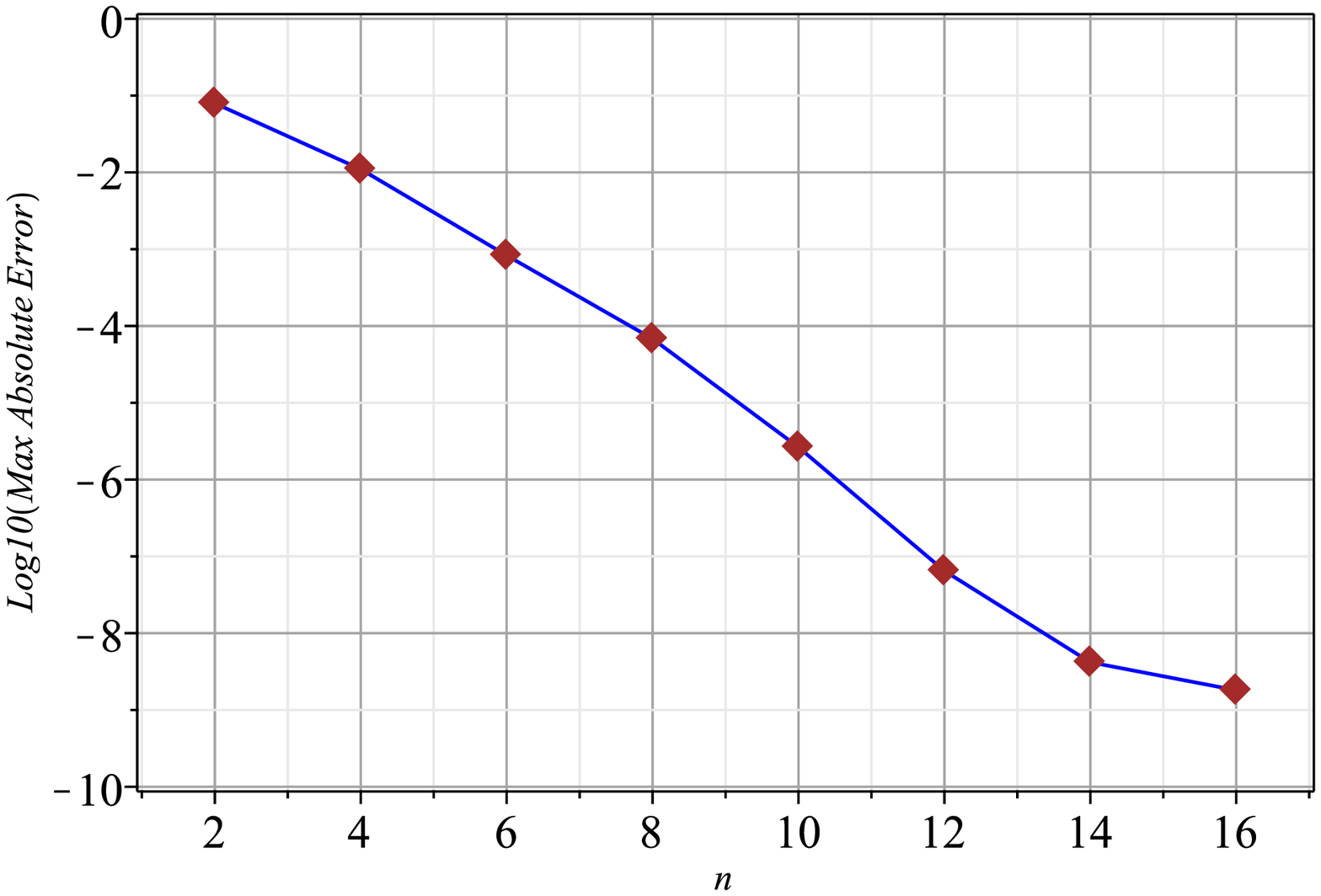}
\caption{The plot of $\Vert e \Vert_{n} $ for various $n$ for Example 5.}
\label{Af4}
\end{figure}
\begin{figure}
 \centering
  \includegraphics[width=1\textwidth]{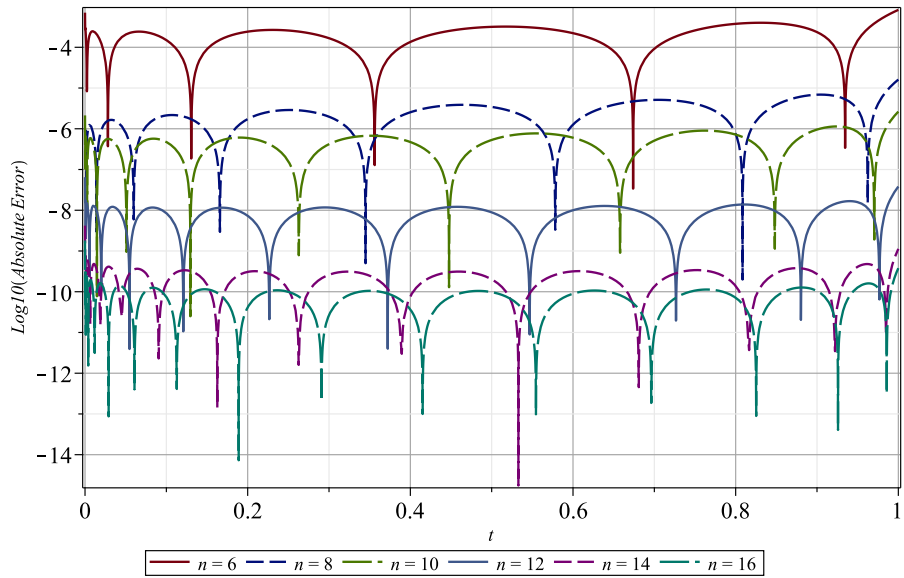}
\caption{Absolute error function $\vert e_{n}(t)\vert $  for various $n$ for Example 5.}
\label{Af4E}
\end{figure}
\end{ex}
\section{Conclusion and future works}\label{Sec6}
 The Tau recursive method is applied using a new class of fractional order polynomials.
 These fractional order polynomials are generated based on a simple recursive algorithm. The performance of this method, in comparison with existing techniques, is illustrated by a set of numerical examples. The success of this method results from the introduction of  fractional polynomials, which allow the Tau-solution to have a similar behavior to the one of the non-smooth solution. As a future work, this method can be extended to the linear/nonlinear integro-differential integral equations of the third kind.

\section*{References}

\end{document}